\newcommand{\R}{\mathbb{R}}
\newcommand{\Rext}{\mathbb{R}\cup\left\{+\infty\right\}}
\newcommand{\set}[1]{\left\{#1\right\}}
\newcommand{\norm}[1]{\Vert#1\Vert}
\newcommand{\dist}[1]{\mathrm{dist}\left(#1\right)}
\newcommand{\Eproof}{\hfill $\square$}
\newcommand{\argmin}{\mathrm{arg}\!\min}
\newcommand{\argmax}{\mathrm{arg}\!\max}
\newcommand{\prox}{\mathrm{prox}}
\newcommand{\dom}[1]{\mathrm{dom}(#1)}
\newcommand{\xb}{{x}}
\newcommand{\yb}{{y}}
\newcommand{\zb}{{z}}
\newcommand{\ub}{{u}}
\newcommand{\vb}{{v}}
\newcommand{\rb}{{r}}
\newcommand{\bb}{{b}}
\newcommand{\Ab}{{A}}
\newcommand{\Bb}{{B}}
\renewcommand{\sb}{{s}}
\newcommand{\iprod}[1]{\left\langle #1\right\rangle}
\newcommand{\iprodb}[1]{\big\langle #1\big\rangle}
\newcommand{\iprods}[1]{\langle #1\rangle}
\newcommand{\Xb}{{X}}
\newcommand{\Ac}{\mathcal{A}}
\newcommand{\Uc}{\mathcal{U}}
\newcommand{\Xc}{\mathcal{X}}
\newcommand{\Bc}{\mathcal{B}}
\newcommand{\Kc}{\mathcal{K}}
\newcommand{\Fopt}{F^{\star}}
\newcommand{\Fc}{\mathcal{F}}
\newcommand{\xopt}{{x}^{\star}}
\newcommand{\uast}{{u}^{\ast}}
\newcommand{\xhat}{\hat{{x}}}
\newcommand{\ubar}{\bar{{u}}}
\newcommand{\xbar}{\bar{{x}}}
\begin{document}

\title{Adaptive Smoothing Algorithms for Nonsmooth Composite Convex Minimization
}


\author{Quoc Tran-Dinh
}


\institute{Quoc Tran-Dinh \at
		Department of Statistics and Operations Research\\
		University of North Carolina at Chapel Hill (UNC), USA.\\
		Email: \url{quoctd@email.unc.edu}
}
\vspace{-3ex}

\date{Received: date / Accepted: date}

\maketitle

\vspace{-4ex}
\begin{abstract}
We propose an adaptive smoothing algorithm based on Nesterov's smoothing technique in \cite{Nesterov2005c} for solving ``fully" nonsmooth composite convex optimization problems.
Our method combines  both Nesterov's accelerated proximal gradient scheme and a new homotopy strategy for smoothness parameter.
By an appropriate choice of smoothing functions, we develop a new algorithm that has  the $\mathcal{O}\left(\frac{1}{\varepsilon}\right)$-worst-case iteration-complexity 
while preserves the same complexity-per-iteration as in Nesterov's method and allows one to automatically update the smoothness parameter at each iteration. 
Then, we customize our algorithm to solve four special cases that cover various applications.
We also specify our algorithm to solve constrained convex optimization problems and show its convergence guarantee on a primal sequence of iterates. 
We demonstrate our algorithm through three numerical examples and compare it with other related algorithms.
\end{abstract}

\keywords{Nesterov's smoothing technique \and accelerated proximal-gradient method \and adaptive algorithm \and composite convex minimization \and nonsmooth convex optimization}
\subclass{90C25   \and 90-08}

\vspace{-2ex}
\section{Introduction}\label{sec:intro}
\vspace{-2ex}
This paper develops new smoothing optimization methods for solving the following ``fully'' nonsmooth composite convex minimization problem:
\begin{equation}\label{eq:com_min}
F^{\star} := \min_{\xb\in\R^p}\Big\{ F(\xb) := f(\xb) + g(\xb) \Big\},
\end{equation}
where $g:\R^p\to\Rext$ is a proper, closed and convex function, and $f : \R^p\to\Rext$ is a convex function defined by the following max-structure:
\begin{equation}\label{eq:fx_structure}
f(\xb) := \max_{\ub\in\R^n}\Big\{ \iprods{\xb, \Ab\ub} - \varphi(\ub) : \ub\in\Uc \Big\}.
\end{equation}
Here, $\varphi : \R^n\to\Rext$ is a proper, closed and convex function, and $\Uc$ is a nonempty, closed, and convex set in $\R^n$, and $\Ab\in\R^{p\times n}$ is given.

Clearly, any proper, closed and convex function $f$ can be written as \eqref{eq:fx_structure} using its Fenchel conjugate $f^{*}$, i.e., $f(\xb) := \sup\set{\iprods{\xb, \ub} - f^{*}(\ub) : \ub\in\dom{f^{*}}}$.
Hence, the max-structure \eqref{eq:fx_structure} does not restrict the applicability of the template \eqref{eq:com_min}.
Moreover, \eqref{eq:com_min} also directly models many practical applications in signal and image processing, machine learning, statistics and data sciences, see, e.g., \cite{Beck2009,BenTal2001,Boyd2011,Combettes2011a,Nesterov2007,Parikh2013,Tran-Dinh2013a} and the references quoted therein.

While the first term $f$ is nonsmooth, the second term $g$ remains unspecified.
On the one hand, we can assume that $g$ is smooth and its gradient is Lipschitz continuous. 
On the other hand, $g$ can be nonsmooth, but it is equipped with a ``tractable'' proximity operator defined as follows:
$g$ is said to be \textit{tractably proximal} if its proximal operator
\vspace{-0.5ex}
\begin{equation}\label{eq:prox_g}
\prox_{g}(\xb) := \argmin_{\yb}\set{ g(\yb) + (1/2)\norm{\yb - \xb}^2 : \yb\in\dom{g}},
\vspace{-0.75ex}
\end{equation}
can be computed ``efficiently'' (e.g., by a closed form or by polynomial time algorithms). 
In general, computing $\prox_g$ requires to solve the strongly convex problem \eqref{eq:prox_g}, 
but in many cases, this operator can be obtained in a closed form or by a low-cost polynomial algorithm. 
Examples of such convex functions can be found in the literature including \cite{Bauschke2011,Combettes2011a,Parikh2013}.

Solving nonsmooth convex optimization problems remains challenging, especially when none of the two nonsmooth terms $f$ and $g$ is equipped with a tractable proximity operator. 
Existing nonsmooth convex optimization approaches such as subgradient-type descent algorithms, dual averaging strategies, bundle-level techniques or derivative-free methods are often used to solve general nonsmooth convex problems. 
However, these methods suffer a slow convergence rate (resp.,  $\mathcal{O}\left(\frac{1}{\epsilon^2}\right)$ - worst-case iteration-complexity). In addition, they are sensitive to the algorithmic parameters such as stepsizes~\cite{Nesterov2004}.

In his pioneering work \cite{Nesterov2005c}, Nesterov shown that one can solve the nonsmooth structured convex minimization problem \eqref{eq:com_min} within $\mathcal{O}\left(\frac{1}{\epsilon}\right)$ iterations.
This method combines a proximity smoothing technique and Nesterov's accelerated gradient scheme \cite{Nesterov1983} to achieve the optimal worst-case iteration-complexity, which is much better than the $\mathcal{O}\left(\frac{1}{\epsilon^2}\right)$-worst-case iteration complexity in nonsmooth optimization methods.

Motivated by \cite{Nesterov2005c}, Nesterov  and many other researchers have proposed different algorithms using such a proximity smoothing method to solver other problems, to improve Nesterov's original algorithm or customize his algorithm to specific applications, see, e.g., \cite{baes2009smoothing,Becker2011b,Becker2011a,chen2014first,Goldfarb2012,Necoara2008,Nedelcu2014,Nesterov2005d,Nesterov2007d,TranDinh2012a}.
In \cite{Beck2012a}, Beck and Teboulle generalized Nesterov's smoothing technique to a generic framework, where they discussed the advantages and disadvantages of smoothing techniques. In addition, they also illustrated the numerical efficiency between smoothing techniques and proximal-type methods.
In \cite{argyriou2014hybrid,orabona2012prisma}, the authors studied smoothing techniques for the sum of three convex functions, where one term is Lipschitz gradient, while the others are nonsmooth.
In \cite{boct2012variable}, a variable smoothing method was proposed, which possesses the  $\mathcal{O}\left(\frac{\ln(k)}{k}\right)$-convergence rate. 
This convergence rate is worse than the one in \cite{Nesterov2005c}. However, as a compensation, the smoothness parameter is updated at each iteration.
In addition, their method uses special quadratic proximity functions, while smooths both $f$ and $g$ under their Lipschitz  continuity assumption.

In \cite{Nesterov2005d}, Nesterov introduced an excessive gap technique, which  requires both primal and dual schemes using two smoothness parameters. 
It symmetrically updates one parameter at each iteration.
Nevertheless, this method uses different assumptions than our method.
Other primal-dual methods studied in, e.g., \cite{Bot2013,Devolder2012} use double smoothing techniques to solve \eqref{eq:com_min}, but only achieve $\mathcal{O}\left(\frac{1}{\varepsilon}\log\left(\frac{1}{\varepsilon}\right)\right)$-worst-case iteration-complexity.

Our approach in this paper is also based on Nesterov's smoothing technique in \cite{Nesterov2005c}.
To clarify the differences between our method and \cite{Nesterov2005d,Nesterov2005c}, let us first briefly present Nesterov's smoothing technique in \cite{Nesterov2005c} applying to \eqref{eq:com_min}.

Recall that a convex function $b_{\Uc} : \R^n\to\R$ is a proximity function of $\Uc$ if it is continuous, and strongly convex with the convexity parameter $\mu_b > 0$ and $\Uc\subseteq \dom{b_{\Uc}}$.
We define
\vspace{-0.5ex}
\begin{equation*}
\ubar^c := \argmin_{\ub}\set{ b_{\Uc}(\ub) : \ub\in\Uc} ~~~\text{and}~~D_{\Uc} := \sup_{\ub}\set{ b_{\Uc}(\ub) : \ub\in\Uc} \in [0, +\infty).
\vspace{-0.5ex}
\end{equation*}
Here, $\ubar^c$ and $D_{\Uc}$ are called the prox-center and prox-diameter of $\Uc$ w.r.t. $b_{\Uc}$, respectively. 
Without loss of generality, we can assume that $b_{\Uc}(\ubar^c) = 0$ and $\mu_b = 1$. Otherwise, we just rescale and shift it.

As shown in \cite{Nesterov2005c}, given  $\gamma > 0$ and $b_{\Uc}$, we can  approximate $f$ by $f_{\gamma}$ as
\vspace{-0.5ex}
\begin{equation}\label{eq:f_gamma}
f_{\gamma}(\xb) := \max_{\ub}\set{ \iprods{\xb, \Ab\ub} - \varphi(\ub) - \gamma b_{\Uc}(\ub) : \ub\in\Uc},
\vspace{-0.5ex}
\end{equation}
where $\gamma$ is called a smoothness parameter.
Since $f_{\gamma}$ is smooth and has Lipschitz gradient,  one can apply accelerated proximal gradient methods \cite{Beck2009,Nesterov2007} to minimize the sum $f_{\gamma}(\cdot) + g(\cdot)$.
Using such methods, we can eventually guarantee
\vspace{-0.5ex}
\begin{equation}\label{eq:nes_bound}
F(\xb^k) - \Fopt \leq \min_{\gamma > 0}\set{\frac{2\norm{\Ab}^2R_0^2}{\gamma(k+1)^2} + \gamma D_{\Uc}} = \frac{2\sqrt{2}\norm{\Ab}R_0\sqrt{D_{\Uc}}}{(k+1)},
\vspace{-0.5ex}
\end{equation}
where $\set{\xb^k}$ is the underlying sequence generated by the accelerated proximal-gradient method, see \cite{Nesterov2005c}, and $R_0 := \norm{\xb^0 - \xopt}$. 
To achieve an $\varepsilon$-solution $\xb^k$ such that $F(\xb^k) - \Fopt\leq\varepsilon$, we set $\gamma \equiv \gamma^{*} :=  \frac{\varepsilon}{2D_{\Uc}}$ at its optimal value.
Hence, the algorithm requires at most $k_{\max} := \left\lfloor 2\sqrt{2}\norm{\Ab}R_0\sqrt{D_{\Uc}}\varepsilon^{-1}\right\rfloor$ iterations.

\vspace{-2.5ex}
\paragraph{Our approach:}
The original smoothing algorithm in \cite{Nesterov2005c} has three computational disadvantages even with  the optimal choice $\gamma^{*} \!:=\! \frac{\varepsilon}{2D_{\Uc}}$ of $\gamma$.
\begin{itemize}
\item[\textrm{(a)}] It requires the prox-diameter $D_{\Uc}$ of $\Uc$ to determine $\gamma^{\ast}$, which may be expensive to estimate when $\Uc$ is complicated.

\item[\textrm{(b)}] If $\varepsilon$ is small and $D_{\Uc}$ is large, then $\gamma^{*}$ is small, and hence, the strong convexity parameter of \eqref{eq:f_gamma} is small.
Algorithms for solving \eqref{eq:f_gamma} have slow convergence speed. 

\item[\textrm{(c)}] The Lipschitz constant of $\nabla{f_{\gamma}}$ is $\gamma^{-1}\Vert A\Vert^2 = \Vert A\Vert^2D_{\Uc}\varepsilon^{-1}$, which  is large. This  leads to a small step-size of $\sfrac{\varepsilon}{(\Vert A\Vert^2D_{\Uc})}$ in the accelerated proximal-gradient algorithm and hence, can have a slow convergence.
\end{itemize} 
Our approach is briefly presented as follows.
We first choose a smooth proximity function $b_{\Uc}$ instead of a general one. We assume that $\nabla{b}_{\Uc}$ is $L_b$-Lipschitz continuous with the Lipschitz constant $L_b \geq \mu_b = 1$.
Then, we define $f_{\gamma}(\xb)$ as in \eqref{eq:f_gamma}, which is a smoothed approximation to $f$ as above.

We design a smoothing accelerated proximal-gradient algorithm that can updates $\gamma$ from $\gamma_k$ to $\gamma_{k\!+\!1}$ at each iteration so that $\gamma_{k\!+\!1} < \gamma_k$ by performing only \emph{one} accelerated proximal-gradient step \cite{Beck2009,Nesterov2007} to minimize the sum $F_{\gamma_{k\!+\!1}} :=  f_{\gamma_{k\!+\!1}} + g$ for each value $\gamma_{k\!+\!1}$ of $\gamma$.
We prove that the sequence of the objective residuals, $\set{F(\xb^k) - \Fopt}$, converges to zero up to the $\mathcal{O}\left(\frac{1}{k}\right)$-rate.

\vspace{-2.0ex}
\paragraph{Our  contributions:}
Our main contributions can be summarized as follows:
\begin{itemize}
\vspace{-1ex}
\item[$\mathrm{(a)}$] 
We propose using a smooth proximity function  to smooth the max-structure objective function $f$ in \eqref{eq:fx_structure}, and develop a new smoothing algorithm, Algorithm \ref{alg:A1}, based on the accelerated proximal-gradient method to adaptively update the smoothness parameter in a heuristic-free fashion.

\item[$\mathrm{(b)}$] 
We prove up to the $\mathcal{O}\left(\frac{1}{\varepsilon}\right)$-worst-case iteration-complexity for our algorithm as in \cite{Nesterov2005c} to achieve an $\varepsilon$-solution, i.e., $F(\xb^k) - \Fopt \leq \varepsilon$.
Especially, with the quadratic proximity function $b_{\Uc}(\cdot) := (1/2)\norm{\cdot - \bar{\ub}^c}^2$, our algorithm achieve exactly the $\mathcal{O}\left(\frac{1}{\varepsilon}\right)$-worst-case iteration-complexity as in \cite{Nesterov2005c}.

\item[$\mathrm{(c)}$] 
We customize our algorithm to handle four important special cases that have a great practical impact in many applications.

\item[$\mathrm{(d)}$]
We specify our algorithm to solve constrained convex minimization problems, and propose an averaging scheme to recover an approximate primal  solution with a rigorous convergence guarantee.
\end{itemize}
\vspace{-1ex}
\noindent
From a practical point of view, we believe that the proposed algorithm can overcome three disadvantages mentioned previously in the original smoothing algorithm in \cite{Nesterov2005c}. 
However, our condition $L_b = 1$ on the choice of proximity functions may lead to some limitation of the proposed algorithm for exploiting further the structures of the constrained set $\Uc$.
Fortunately, we can identify several important settings in Section \ref{sec:exploit_struct}, where we can eliminate this disadvantage. 
Such classes of problems cover several applications in image processing, compressive sensing, and monotropic programming \cite{Bauschke2011,Combettes2011a,Parikh2013,Yang2011}.

\vspace{-2ex}
\paragraph{Paper organization:}
The rest of this paper is organized as follows.
Section~\ref{sec:prel} briefly discusses our smoothing technique.
Section~\ref{sec:main_alg} presents our main algorithm, Algorithm \ref{alg:A1}, and proves its convergence guarantee.
Section~\ref{sec:exploit_struct}  handles four special but important cases of \eqref{eq:com_min}. 
Section~\ref{sec:app} specializes our algorithm to solve constrained convex minimization problems.
Preliminarily numerical examples are given in Section~\ref{sec:num_exp}.
For clarity of presentation, we move the long and technical proofs to the appendix.

\vspace{-3ex}
\paragraph{Notation and terminology:}
We work on the real spaces $\R^p$ and $\R^n$, equipped with the standard inner product $\iprods{\cdot, \cdot}$ and the Euclidean $\ell_2$-norm $\norm{\cdot}$.
Given a proper, closed, and convex function $g$, we use $\dom{g}$ and $\partial{g}(\xb)$ to denote its domain and its subdifferential at $\xb$, respectively. 
If $g$ is differentiable, then $\nabla{g}(\xb)$ stands for its gradient at $\xb$.

We denote $f^{*}(\sb) := \sup\set{\iprod{\sb, \xb} - f(\xb) : \xb\in\dom{f}}$, the Fenchel conjugate of $f$.
For a given set $\Xc$, $\delta_{\Xc}(\xb) := 0$ if $\xb\in\Xc$ and $\delta_{\Xc}(\xb) := +\infty$, otherwise, defines the indicator function of $\Xc$.
For a smooth function $f$, we say that $f$ is $L_f$-smooth if for any $\xb, \tilde{\xb}\in\dom{f}$, we have $\Vert\nabla{f}(\xb) - \nabla{f}(\tilde{\xb})\Vert \leq L_f\norm{\xb - \tilde{\xb}}$, where $L(f) := L_f \in [0, \infty)$. 
We denote by $\mathcal{F}_{L}^{1,1}$ the class of all $L_f$-smooth and convex functions $f$.
We also use $\mu_f \equiv \mu(f)$ for the strong convexity parameter of a convex function $f$.
For a given symmetric matrix $\Xb$, $\lambda_{\min}(\Xb)$ and $\lambda_{\max}(\Xb)$ denote its smallest and largest eigenvalues of $\Xb$, respectively; and $\mathrm{cond}(\Xb)$ is the condition number of $\Xb$. 
Given a nonempty, closed and convex set $\Xc$, $\dist{\xb,\Xc}$ denotes the distance from $\xb$ to $\Xc$.

\vspace{-3.5ex}
\section{Smoothing techniques via smooth proximity functions}\label{sec:prel}
\vspace{-2ex}
Let $b_{\Uc}$ be a prox-function of the nonempty, closed and convex set $\Uc$ with the strong convexity parameter $\mu_b = 1$. 
In addition, $b_{\Uc}$ is smooth on $\Uc$, and its gradient $\nabla{b_{\Uc}}$ is Lipschitz continuous with the Lipschitz constant $L_b \geq \mu_b = 1$.
In this case, $b_{\Uc}$  is said to be $L_b$-smooth.
As a default example, $b_{\Uc}(\cdot) := (1/2)\norm{\cdot - \ubar^c}^2$ for fixed $\ubar^c\in\Uc$ satisfies our assumptions with $L_b = \mu_b = 1$.
Let $\ubar^c$ be the $b$-prox-center point of $\Uc$, i.e., $\ubar^c := \argmin_{\ub}\set{b_{\Uc}(\ub) : \ub\in\Uc}$. 
Without loss of generality, we can assume that  $b_{\Uc}(\ubar^c) = 0$. 
Otherwise, we consider $\bar{b}_{\Uc}(\ub) := b_{\Uc}(\ub) - b_{\Uc}(\ubar^c)$.

Given a convex function $\varphi^{\ast}(\zb) := \max_{\ub}\set{\iprods{\zb, \ub} - \varphi(\ub) : \ub\in\Uc}$, we define a smoothed approximation of$ \varphi^{\ast}$ as
\begin{equation}\label{eq:b_cond}
\varphi^{\ast}_{\gamma}(\zb) := \max_{\ub\in\Uc}\set{\iprods{\zb, \ub} - \varphi(\ub) - \gamma b_{\Uc}(\ub)},
\end{equation}
where $\gamma > 0$ is a smoothness parameter.
We note that $\varphi^{\ast}$ is not a Fenchel conjugate of $\varphi$ unless $\Uc = \dom{\varphi}$. 
We denote by $\uast_{\gamma}(\xb)$ the unique optimal solution of the strongly concave maximization problem~\eqref{eq:b_cond}, i.e.:
\begin{equation}\label{eq:uast_beta}
\uast_{\gamma}(\zb) \in \mathrm{arg}\!\max_{\ub}\set{ \iprods{\zb, \ub} - \varphi(\ub) - \gamma b_{\Uc}(\ub) : \ub\in\Uc}.
\end{equation}
We also define $D_{\Uc} := \sup_{\ub}\set{b_{\Uc}(\ub) : \ub\in\Uc\cap\dom{\varphi}}$ the $b$-prox diameter of $\Uc$.
If $\Uc$ or $\dom{\varphi}$  is bounded, then $D_{\Uc} \in [0, +\infty)$.

Associated with $\varphi^{\ast}_{\gamma}$, we  consider a smoothed function for  $f$  in \eqref{eq:fx_structure} as
\begin{equation}\label{eq:f_beta}
f_{\gamma}(\xb) := \varphi^{\ast}_{\gamma}(\Ab^{\top}\xb) = \max_{\ub}\set{ \iprods{\Ab^{\top}\xb, \ub} - \varphi(\ub) - \gamma b_{\Uc}(\ub) : \ub\in\Uc}.
\end{equation}
Then, the following lemma summaries the properties of the smoothed function $\varphi^{\ast}_{\gamma}$ defined by \eqref{eq:b_cond} and $f_{\gamma}$ defined by \eqref{eq:f_beta}, whose proof can be found in \cite{Tran-Dinh2014a}.

\begin{lemma}\label{le:fbeta_smoothness}
The function $\varphi^{\ast}_{\gamma}$ defined by \eqref{eq:b_cond} is convex and smooth. Its gradient is given by $\nabla{\varphi^{\ast}_{\gamma}}(\zb) := \ub^{\ast}_{\gamma}(\zb)$ which is Lipschitz continuous with the Lipschitz constant $L_{\varphi_{\gamma}^{\ast}} := \gamma^{-1}$. Consequently, for any $\zb,\bar{\zb}\in\R^n$, we have
\begin{equation}\label{eq:pro_cond1}
\frac{\gamma}{2}\norm{\ub^{\ast}_{\gamma}(\zb) - \ub^{\ast}_{\gamma}(\bar{\zb})}^2 \leq \varphi_{\gamma}^{\ast}(\zb) - \varphi_{\gamma}^{\ast}(\bar{\zb}) - \iprods{\nabla{\varphi_{\gamma}^{\ast}}(\zb), \zb - \bar{\zb}} \leq \frac{1}{2\gamma}\norm{\zb - \bar{\zb}}^2.
\end{equation}
For fixed $\zb\in\R^n$,  $\varphi_{\gamma}^{\ast}(\zb)$ is convex w.r.t. $\gamma \in\R_{++}$, and
\begin{equation}\label{eq:key_est2}
\varphi^{\ast}_{\gamma}(\zb) - (\hat{\gamma} - \gamma)b_{\Uc}(\uast_{\gamma}(\zb)) \leq \varphi^{\ast}_{\hat{\gamma}}(\zb), ~~\forall \gamma,\hat{\gamma}\in\R_{++}.
\end{equation}
As a consequence, $f_{\gamma}$ defined by \eqref{eq:f_beta} is convex and smooth. Its gradient is given by $\nabla{f_{\gamma}}(\xb) = \Ab\uast_{\gamma}(\Ab^{\top}\xb)$, which is Lipschitz continuous with the Lipschitz constant $L_{f_{\gamma}} := \gamma^{-1}\norm{\Ab}^2$.
In addition, we also have
\begin{equation}\label{eq:key_est1}
f_{\gamma}(\xb) \leq f(\xb) \leq f_{\gamma}(\xb) + \gamma D_{\Uc}, ~~\forall \xb\in\R^p.
\end{equation}
\end{lemma}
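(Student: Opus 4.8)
The plan is to establish everything first for $\varphi^{\ast}_{\gamma}$ in \eqref{eq:b_cond} and then push it to $f_{\gamma}$ via the linear substitution $\zb = \Ab^{\top}\xb$. For fixed $\zb$, the inner objective $\ub \mapsto \iprods{\zb,\ub} - \varphi(\ub) - \gamma b_{\Uc}(\ub)$ is the sum of a linear term, the concave $-\varphi$, and the $\gamma$-strongly concave $-\gamma b_{\Uc}$ (since $\mu_b = 1$); hence it is $\gamma$-strongly concave on $\Uc$ and has the unique maximizer $\uast_{\gamma}(\zb)$. Convexity of $\varphi^{\ast}_{\gamma}(\cdot)$ is then immediate, being a pointwise supremum of the affine maps $\zb\mapsto\iprods{\zb,\ub} - \varphi(\ub) - \gamma b_{\Uc}(\ub)$; convexity of $\gamma\mapsto\varphi^{\ast}_{\gamma}(\zb)$ on $\R_{++}$ follows identically, since for fixed $\ub$ this map is affine in $\gamma$.

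Next I would apply Danskin's theorem: uniqueness of $\uast_{\gamma}(\zb)$ gives differentiability with $\nabla\varphi^{\ast}_{\gamma}(\zb) = \uast_{\gamma}(\zb)$, and also $\tfrac{\partial}{\partial\gamma}\varphi^{\ast}_{\gamma}(\zb) = -b_{\Uc}(\uast_{\gamma}(\zb))$. The quantitative core is the bound $\norm{\uast_{\gamma}(\zb) - \uast_{\gamma}(\bar\zb)} \le \gamma^{-1}\norm{\zb - \bar\zb}$. I would write the first-order optimality conditions for $\ub := \uast_{\gamma}(\zb)$ and $\bar{\ub} := \uast_{\gamma}(\bar\zb)$ in terms of $\partial\varphi$ and the normal cone $N_{\Uc}$, test each one against the other point, and add; the normal-cone terms cancel, monotonicity of $\partial\varphi$ removes the subgradient contribution, and $1$-strong convexity of $b_{\Uc}$ yields $\iprods{\nabla b_{\Uc}(\ub) - \nabla b_{\Uc}(\bar{\ub}), \ub - \bar{\ub}} \ge \norm{\ub - \bar{\ub}}^2$. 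What survives is $\gamma\norm{\ub - \bar{\ub}}^2 \le \iprods{\zb - \bar\zb, \ub - \bar{\ub}} \le \norm{\zb - \bar\zb}\,\norm{\ub - \bar{\ub}}$, i.e. the claimed Lipschitz estimate, so $L_{\varphi_{\gamma}^{\ast}} = \gamma^{-1}$. The two-sided bound \eqref{eq:pro_cond1} is then the standard characterization of a convex function with $\gamma^{-1}$-Lipschitz gradient (upper bound: the descent lemma; lower bound: co-coercivity), read off with $\nabla\varphi^{\ast}_{\gamma} = \uast_{\gamma}$.

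For \eqref{eq:key_est2} I would just feed $\ub = \uast_{\gamma}(\zb)$ into the definition of $\varphi^{\ast}_{\hat\gamma}(\zb)$, obtaining $\varphi^{\ast}_{\hat\gamma}(\zb) \ge \iprods{\zb,\uast_{\gamma}(\zb)} - \varphi(\uast_{\gamma}(\zb)) - \hat\gamma b_{\Uc}(\uast_{\gamma}(\zb))$, then substitute $\iprods{\zb,\uast_{\gamma}(\zb)} - \varphi(\uast_{\gamma}(\zb)) = \varphi^{\ast}_{\gamma}(\zb) + \gamma b_{\Uc}(\uast_{\gamma}(\zb))$ and rearrange; equivalently this is the subgradient inequality for the convex function $\gamma\mapsto\varphi^{\ast}_{\gamma}(\zb)$ with slope $-b_{\Uc}(\uast_{\gamma}(\zb))$. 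Transferring to $f_{\gamma}(\xb) = \varphi^{\ast}_{\gamma}(\Ab^{\top}\xb)$ is routine: the chain rule gives $\nabla f_{\gamma}(\xb) = \Ab\uast_{\gamma}(\Ab^{\top}\xb)$, and $\norm{\nabla f_{\gamma}(\xb) - \nabla f_{\gamma}(\tilde\xb)} \le \norm{\Ab}\,\gamma^{-1}\norm{\Ab^{\top}(\xb - \tilde\xb)} \le \gamma^{-1}\norm{\Ab}^2\norm{\xb - \tilde\xb}$, so $L_{f_{\gamma}} = \gamma^{-1}\norm{\Ab}^2$. Finally, for \eqref{eq:key_est1}: $b_{\Uc} \ge 0$ on $\Uc$ (because $b_{\Uc}(\ubar^c) = 0$ is its minimum) gives $f_{\gamma}(\xb) \le f(\xb)$ directly, and choosing a maximizer (or maximizing sequence) $\hat{\ub}\in\Uc\cap\dom{\varphi}$ of \eqref{eq:fx_structure} yields $f(\xb) = \iprods{\Ab^{\top}\xb,\hat{\ub}} - \varphi(\hat{\ub}) \le f_{\gamma}(\xb) + \gamma b_{\Uc}(\hat{\ub}) \le f_{\gamma}(\xb) + \gamma D_{\Uc}$.

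I expect the main obstacle to be the rigorous handling of differentiability and of the $\gamma^{-1}$-Lipschitz estimate when $\Uc$ is a genuine constraint set and $\varphi$ is nonsmooth: Danskin's theorem and the optimality conditions must be phrased through $N_{\Uc}$ and $\partial\varphi$, and one has to check that the monotonicity-plus-strong-convexity bookkeeping closes cleanly; the remaining items are short consequences of convexity and the definitions.
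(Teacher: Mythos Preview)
Your proposal is correct and follows the standard route one expects for this lemma: pointwise supremum of affine functions for convexity, Danskin's theorem for differentiability, the monotonicity/strong-concavity argument on the optimality conditions for the $\gamma^{-1}$-Lipschitz bound, the descent lemma and co-coercivity for \eqref{eq:pro_cond1}, direct substitution for \eqref{eq:key_est2}, and the chain rule plus the bounds $0\le b_{\Uc}\le D_{\Uc}$ for the statements about $f_\gamma$. The paper itself does not give a proof here but defers to \cite{Tran-Dinh2014a}; your argument is exactly the kind of proof that reference supplies, so there is no methodological divergence to flag. One minor wording point: the normal-cone contributions do not ``cancel'' but rather have the favorable sign ($\langle n - \bar n,\ub-\bar{\ub}\rangle\ge 0$), which is what your inequality needs; and note that \eqref{eq:pro_cond1} as printed should be read with $\nabla\varphi^{\ast}_{\gamma}(\bar{\zb})$ in the middle term (this is how the paper uses it in \eqref{eq:key_est21a}), which is precisely the Bregman-divergence quantity your descent-lemma/co-coercivity argument bounds.
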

We emphasize that Lemma~\ref{le:fbeta_smoothness} provides key properties to analyze the complexity of our algorithm in the next setions.

\vspace{-3ex}
\section{The adaptive smoothing algorithm and its convergence}\label{sec:main_alg}
\vspace{-2ex}
Associated with \eqref{eq:com_min}, we consider its smoothed composite convex problem as
\begin{equation}\label{eq:smoothed_com_min}
F_{\gamma}^{\star} := \min_{\xb\in\R^p}\set{ F_{\gamma}(\xb) := f_{\gamma}(\xb) + g(\xb)}.
\end{equation}
Similar to \cite{Nesterov2005c}, the main step of Nesterov's accelerated proximal-gradient scheme \cite{Beck2009,Nesterov2007} applied to  the smoothed problem~\eqref{eq:smoothed_com_min} is expressed as follows:
\begin{equation}\label{eq:prox_grad_step}
\begin{array}{ll}
\xb^{k\!+\!1} &:= \prox_{\beta g}\left(\xhat^k - \beta \nabla{f}_{\gamma}(\xhat^k)\right)\vspace{1ex}\\
&\equiv \displaystyle\argmin_{\xb\in\R^p}\Big\{ g(\xb) + 
\frac{1}{2\beta}\big\Vert \xb -  \big(\xhat^k - \beta\Ab\uast_{\gamma}(\Ab^{\top}\xhat^k)\big)\big\Vert^2\Big\},
\end{array}
\end{equation}
where $\hat{\xb}^k$ is given, and $\beta > 0$ is a given step size, which will be chosen later.

The following lemma provides a descent property of the proximal-gradient step \eqref{eq:prox_grad_step}, whose proof can be found in Appendix~\ref{apdx:le:descent_inequality}. 

\begin{lemma}\label{le:descent_inequality}
Let $\xb^{k\!+\!1}$ be  generated by \eqref{eq:prox_grad_step}. Then, for any $\xb\in\R^p$, we have
\begin{equation}\label{eq:key_est21}
\begin{array}{ll}
F_{\gamma}(\xb^{k\!+\!1}) &\leq \hat{\ell}^k_{\gamma}(\xb) + \frac{1}{\beta}\iprods{\xb^{k\!+\!1} - \xhat^k, \xb -\xhat^k} - \frac{1}{2}\left(\frac{2}{\beta} - \frac{\norm{\Ab}^2}{\gamma}\right)\norm{\xhat^k - \xb^{k\!+\!1}}^2, 
\end{array}
\end{equation}
where 
\begin{equation}\label{eq:key_est21a}
\begin{array}{ll}
\hat{\ell}^k_{\gamma}(\xb) &:= f_{\gamma}(\xhat^k) + \iprods{\nabla{f_{\gamma}}(\xhat^k), \xb - \xhat^k} + g(\xb) \vspace{1ex}\\
& \leq F_{\gamma}(\xb) - \frac{\gamma}{2}\norm{\ub^{\ast}_{\gamma}(\Ab^{\top}\xb) - \ub^{\ast}_{\gamma}(\Ab^{\top}\xhat^k)}^2.
\end{array}
\end{equation}
\end{lemma}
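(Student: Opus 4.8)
The plan is to derive \eqref{eq:key_est21} directly from the standard descent properties of a proximal-gradient step, combined with the second-order smoothness of $f_\gamma$ recorded in Lemma~\ref{le:fbeta_smoothness}. First I would recall the optimality condition of the proximal subproblem \eqref{eq:prox_grad_step}: there exists $\xi^{k+1}\in\partial g(\xb^{k+1})$ such that $\xi^{k+1} = -\nabla f_\gamma(\xhat^k) - \tfrac{1}{\beta}(\xb^{k+1}-\xhat^k)$. Using convexity of $g$, for any $\xb$ this gives
\begin{equation*}
g(\xb^{k+1}) \leq g(\xb) + \iprods{\xi^{k+1}, \xb^{k+1} - \xb} = g(\xb) - \iprods{\nabla f_\gamma(\xhat^k), \xb^{k+1} - \xb} - \tfrac{1}{\beta}\iprods{\xb^{k+1} - \xhat^k, \xb^{k+1} - \xb}.
\end{equation*}

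Next I would control $f_\gamma(\xb^{k+1})$. Applying the right-hand inequality in \eqref{eq:pro_cond1} with $\zb = \Ab^\top\xb^{k+1}$ and $\bar\zb = \Ab^\top\xhat^k$, and using $\norm{\Ab^\top(\xb^{k+1}-\xhat^k)}\leq\norm{\Ab}\,\norm{\xb^{k+1}-\xhat^k}$ together with $\nabla f_\gamma(\xhat^k)=\Ab\uast_\gamma(\Ab^\top\xhat^k)$, we obtain the descent estimate
\begin{equation*}
f_\gamma(\xb^{k+1}) \leq f_\gamma(\xhat^k) + \iprods{\nabla f_\gamma(\xhat^k), \xb^{k+1}-\xhat^k} + \frac{\norm{\Ab}^2}{2\gamma}\norm{\xb^{k+1}-\xhat^k}^2.
\end{equation*}
Adding the two displayed inequalities and regrouping the $\nabla f_\gamma(\xhat^k)$ terms so that they combine into $\iprods{\nabla f_\gamma(\xhat^k), \xb - \xhat^k}$ yields, after adding and subtracting $f_\gamma(\xhat^k)$ appropriately,
\begin{equation*}
F_\gamma(\xb^{k+1}) \leq \hat\ell^k_\gamma(\xb) - \tfrac{1}{\beta}\iprods{\xb^{k+1}-\xhat^k, \xb^{k+1}-\xb} + \frac{\norm{\Ab}^2}{2\gamma}\norm{\xb^{k+1}-\xhat^k}^2,
\end{equation*}
where $\hat\ell^k_\gamma(\xb)$ is exactly the linearization in \eqref{eq:key_est21a}. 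Then I would use the elementary identity $-\iprods{\xb^{k+1}-\xhat^k,\xb^{k+1}-\xb} = \iprods{\xb^{k+1}-\xhat^k,\xb-\xhat^k} - \norm{\xb^{k+1}-\xhat^k}^2$ to rewrite the cross term, which produces precisely the right-hand side of \eqref{eq:key_est21} after collecting the two $\norm{\xhat^k-\xb^{k+1}}^2$ coefficients into $-\tfrac12\bigl(\tfrac{2}{\beta}-\tfrac{\norm{\Ab}^2}{\gamma}\bigr)$.

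Finally, for the second assertion \eqref{eq:key_est21a}, namely that $\hat\ell^k_\gamma(\xb)\leq F_\gamma(\xb) - \tfrac{\gamma}{2}\norm{\uast_\gamma(\Ab^\top\xb)-\uast_\gamma(\Ab^\top\xhat^k)}^2$, I would invoke the \emph{left-hand} inequality in \eqref{eq:pro_cond1}: with $\zb=\Ab^\top\xb$, $\bar\zb=\Ab^\top\xhat^k$ it reads $\tfrac{\gamma}{2}\norm{\uast_\gamma(\Ab^\top\xb)-\uast_\gamma(\Ab^\top\xhat^k)}^2 \leq f_\gamma(\xb) - f_\gamma(\xhat^k) - \iprods{\nabla f_\gamma(\xhat^k),\xb-\xhat^k}$, where I have used $\varphi^\ast_\gamma(\Ab^\top\cdot)=f_\gamma(\cdot)$ and the chain rule for the gradient. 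Rearranging and adding $g(\xb)$ to both sides gives exactly the claimed bound on $\hat\ell^k_\gamma(\xb)$. I expect the only delicate point to be bookkeeping: making sure the subgradient inequality for $g$ is evaluated at the right pair of points and that the $\nabla f_\gamma(\xhat^k)$-terms coming from the $g$-inequality and from the quadratic upper bound on $f_\gamma$ combine cleanly into the single inner product $\iprods{\nabla f_\gamma(\xhat^k),\xb-\xhat^k}$ appearing in $\hat\ell^k_\gamma$; the rest is the standard three-point manipulation. No convexity of $F_\gamma$ beyond what Lemma~\ref{le:fbeta_smoothness} already supplies is needed.
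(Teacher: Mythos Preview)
Your proposal is correct and follows essentially the same approach as the paper: the paper's appendix proof simply records the two-sided estimate
\[
\frac{\gamma}{2}\norm{\uast_\gamma(\Ab^{\top}\xb)-\uast_\gamma(\Ab^{\top}\bar\xb)}^2 \le f_\gamma(\xb)-f_\gamma(\bar\xb)-\iprods{\nabla f_\gamma(\bar\xb),\xb-\bar\xb}\le \frac{\norm{\Ab}^2}{2\gamma}\norm{\xb-\bar\xb}^2
\]
obtained from \eqref{eq:pro_cond1} and then defers the remaining standard proximal-gradient manipulations to \cite{Tran-Dinh2014a}; your argument is precisely a self-contained spelling out of those manipulations (optimality of the prox step plus convexity of $g$, the quadratic upper bound on $f_\gamma$, and the three-point identity), together with the co-coercivity lower bound for \eqref{eq:key_est21a}.
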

We now adopt the accelerated proximal-gradient scheme (FISTA) in  \cite{Beck2009} to solve \eqref{eq:smoothed_com_min} using an adaptive step-size $\beta_{k\!+\!1} := \frac{\gamma_{k\!+\!1}}{\norm{\Ab}^2}$, which becomes
\begin{equation}\label{eq:acc_grad_scheme}
\left\{\begin{array}{ll}
\xhat^k     &:= (1-\tau_k)\xb^k + \tau_k\tilde{\xb}^k\vspace{0.7ex}\\
\xb^{k\!+\!1} &:= \prox_{\beta_{k\!+\!1}g}\left(\xhat^k - \beta_{k\!+\!1}\nabla{f}_{\gamma_{k\!+\!1}}(\xhat^k)\right)\vspace{1.0ex}\\
\tilde{\xb}^{k\!+\!1} &:= \tilde{\xb}^k -\frac{1}{\tau_k}(\xhat^k - \xb^{k\!+\!1}),
\end{array}\right.
\end{equation}
where $\gamma_{k\!+\!1} > 0$ is the smoothness parameter, and $\tau_k \in (0, 1]$. 

By letting $t_k := \frac{1}{\tau_k}$, we can eliminate  $\tilde{\xb}^k$ in \eqref{eq:acc_grad_scheme} to obtain a compact version
\begin{equation}\label{eq:acc_grad_scheme2}
\left\{\begin{array}{ll}
\xb^{k\!+\!1} &:= \prox_{\beta_{k\!+\!1}g}\left(\xhat^k - \beta_{k\!+\!1}\nabla{f}_{\gamma_{k\!+\!1}}(\xhat^k)\right)\vspace{1.2ex}\\
\xhat^{k\!+\!1} & := \xb^{k\!+\!1} + \frac{t_k-1}{t_{k\!+\!1}}(\hat{\xb}^k - \xb^k).
\end{array}\right.
\end{equation}
The following lemma provides a key estimate to prove the convergence of the scheme~\eqref{eq:acc_grad_scheme} (or \eqref{eq:acc_grad_scheme2}), whose proof can be found in Appendix \ref{apdx:le:key_estimate1}.

\begin{lemma}\label{le:key_estimate1}
Let $\set{(\xb^k, \tilde{\xb}^k, \tau_k,\gamma_k)}$ be the sequence generated by \eqref{eq:acc_grad_scheme}. 
Then
\begin{equation}\label{eq:key_estimate31}
F_{\gamma_{k\!+\!1}}(\xb^{k\!+\!1}) \leq (1 \!-\! \tau_k)F_{\gamma_{k}}(\xb^k) \!+\! \tau_kF(\xb) \!+\! \frac{\norm{\Ab}^2\tau_k^2}{2\gamma_{k\!+\!1}}\left[\norm{\tilde{\xb}^k \!\!-\! \xb}^2 \!-\! \norm{\tilde{\xb}^{k\!+\!1}  \!\!-\! \xb}^2\right] \!-\! R_k,
\end{equation}
for any $x\in\R^p$ and $R_k$ is defined by
\begin{align}\label{eq:Rk_term}
\begin{array}{ll}
R_k & := \tau_k\gamma_{k\!+\!1}b_{\Uc}(\uast_{\gamma_{k\!+\!1}}(\Ab^{\top}\xhat^k)) -  (1-\tau_k)(\gamma_k - \gamma_{k\!+\!1})b_{\Uc}(\uast_{\gamma_{k\!+\!1}}(\Ab^{\top}\xb^k))\vspace{1.25ex}\\
&+ \frac{(1 - \tau_k)\gamma_{k\!+\!1}}{2}\norm{\uast_{\gamma_{k\!+\!1}}(\Ab^{\top}\xb^k) \!-\! \uast_{\gamma_{k\!+\!1}}(\Ab^{\top}\xhat^k)}^2.
\end{array}
\end{align}
Moreover, the quantity $R_k$ is bounded from below by
\begin{equation}\label{eq:Rk_lower_bound}
\begin{array}{ll}
R_k &\geq \frac{1}{2}(1-\tau_k)\left[\gamma_{k\!+\!1}\tau_k - L_b(\gamma_k - \gamma_{k\!+\!1})\right]b_{\Uc}(\ub^{*}_{\gamma_{k\!+\!1}}(\Ab^{\top}\xb^k)).
\end{array}
\end{equation}
\end{lemma}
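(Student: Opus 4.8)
\textbf{Proof proposal for Lemma~\ref{le:key_estimate1}.}

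The plan is to establish the recursive inequality \eqref{eq:key_estimate31} by applying the descent estimate of Lemma~\ref{le:descent_inequality} at the scaled parameter $\gamma_{k\!+\!1}$ with step-size $\beta_{k\!+\!1} = \gamma_{k\!+\!1}/\norm{\Ab}^2$, evaluated at a convex combination of the anchor point $\xb^k$ and an arbitrary test point $\xb$. First I would note that with this choice of step-size the curvature coefficient $\tfrac{2}{\beta_{k\!+\!1}} - \tfrac{\norm{\Ab}^2}{\gamma_{k\!+\!1}}$ in \eqref{eq:key_est21} equals exactly $\norm{\Ab}^2/\gamma_{k\!+\!1} = 1/\beta_{k\!+\!1}$, which is the standard FISTA-type simplification. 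I would then apply \eqref{eq:key_est21}--\eqref{eq:key_est21a} at the test point $\yb := (1-\tau_k)\xb^k + \tau_k\xb$, using convexity of $F_{\gamma_{k\!+\!1}}$ to write $\hat\ell^k_{\gamma_{k\!+\!1}}(\yb) \le (1-\tau_k)\hat\ell^k_{\gamma_{k\!+\!1}}(\xb^k) + \tau_k\hat\ell^k_{\gamma_{k\!+\!1}}(\xb)$, and bound each piece: the $\xb^k$-piece via the second line of \eqref{eq:key_est21a} (which produces $F_{\gamma_{k\!+\!1}}(\xb^k)$ minus a squared-distance term in the dual variables that will feed into $R_k$), and the $\xb$-piece similarly to get $F_{\gamma_{k\!+\!1}}(\xb)$, after which I replace $F_{\gamma_{k\!+\!1}}(\xb)$ by $F(\xb)$ using $f_{\gamma_{k\!+\!1}} \le f$ from \eqref{eq:key_est1}. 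The linear term $\tfrac{1}{\beta_{k\!+\!1}}\iprods{\xb^{k\!+\!1} - \xhat^k, \yb - \xhat^k}$ together with the quadratic term $-\tfrac{1}{2\beta_{k\!+\!1}}\norm{\xhat^k - \xb^{k\!+\!1}}^2$ should be combined, using $\xhat^k - \yb = \tau_k(\tilde\xb^k - \xb)$ (from the first line of \eqref{eq:acc_grad_scheme}) and the update $\tilde\xb^{k\!+\!1} = \tilde\xb^k - \tfrac{1}{\tau_k}(\xhat^k - \xb^{k\!+\!1})$, into the telescoping form $\tfrac{\norm{\Ab}^2\tau_k^2}{2\gamma_{k\!+\!1}}\big[\norm{\tilde\xb^k - \xb}^2 - \norm{\tilde\xb^{k\!+\!1} - \xb}^2\big]$ by completing the square.

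The remaining task is to handle the left-hand side: we have produced $F_{\gamma_{k\!+\!1}}(\xb^{k\!+\!1})$ on the left already, but on the right we have $(1-\tau_k)F_{\gamma_{k\!+\!1}}(\xb^k)$ rather than the desired $(1-\tau_k)F_{\gamma_{k}}(\xb^k)$. To reconcile these I would invoke the monotonicity/convexity-in-$\gamma$ estimate \eqref{eq:key_est2} of Lemma~\ref{le:fbeta_smoothness} applied to $\varphi^\ast_{\gamma}(\Ab^\top\xb^k) = f_\gamma(\xb^k)$ with $\gamma = \gamma_{k\!+\!1}$ and $\hat\gamma = \gamma_k$, giving $f_{\gamma_{k\!+\!1}}(\xb^k) - (\gamma_k - \gamma_{k\!+\!1})b_{\Uc}(\uast_{\gamma_{k\!+\!1}}(\Ab^\top\xb^k)) \le f_{\gamma_k}(\xb^k)$, hence $F_{\gamma_{k\!+\!1}}(\xb^k) \le F_{\gamma_k}(\xb^k) + (\gamma_k - \gamma_{k\!+\!1})b_{\Uc}(\uast_{\gamma_{k\!+\!1}}(\Ab^\top\xb^k))$. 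Collecting all the $b_{\Uc}$-terms and dual-distance terms that were shed along the way — the $-\tau_k\gamma_{k\!+\!1}b_{\Uc}(\uast_{\gamma_{k\!+\!1}}(\Ab^\top\xhat^k))$ coming from the lower bound in \eqref{eq:key_est21a}, the $(1-\tau_k)(\gamma_k-\gamma_{k\!+\!1})b_{\Uc}(\uast_{\gamma_{k\!+\!1}}(\Ab^\top\xb^k))$ just introduced, and the $\tfrac{(1-\tau_k)\gamma_{k\!+\!1}}{2}\norm{\uast_{\gamma_{k\!+\!1}}(\Ab^\top\xb^k) - \uast_{\gamma_{k\!+\!1}}(\Ab^\top\xhat^k)}^2$ from applying the first inequality in \eqref{eq:pro_cond1} — packaging them with the correct signs yields precisely the quantity $-R_k$ with $R_k$ as in \eqref{eq:Rk_term}. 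Careful bookkeeping of signs and of which inequality in \eqref{eq:pro_cond1}/\eqref{eq:key_est21a} is used where will be needed so that the shed terms assemble into $R_k$ rather than $-R_k$.

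For the lower bound \eqref{eq:Rk_lower_bound} on $R_k$, I would drop the nonnegative first term $\tau_k\gamma_{k\!+\!1}b_{\Uc}(\uast_{\gamma_{k\!+\!1}}(\Ab^\top\xhat^k)) \ge 0$ (using $b_{\Uc} \ge 0$ since $b_{\Uc}(\ubar^c) = 0$ is the minimum), and then control the cross term: by strong convexity and $L_b$-smoothness of $b_{\Uc}$, the distance $\norm{\uast_{\gamma_{k\!+\!1}}(\Ab^\top\xb^k) - \uast_{\gamma_{k\!+\!1}}(\Ab^\top\xhat^k)}^2$ relates to $b_{\Uc}(\uast_{\gamma_{k\!+\!1}}(\Ab^\top\xb^k))$; more directly, I expect one uses that $b_{\Uc}(\uast_{\gamma_{k\!+\!1}}(\Ab^\top\xb^k)) \le L_b\cdot\tfrac12\norm{\uast_{\gamma_{k\!+\!1}}(\Ab^\top\xb^k) - \ubar^c}^2$ is not quite it — rather, one bounds $b_{\Uc}(\uast_{\gamma_{k\!+\!1}}(\Ab^\top\xb^k))$ below via strong convexity ($b_{\Uc}(\ub) \ge \tfrac12\norm{\ub - \ubar^c}^2$) and the squared-difference term from below using the triangle inequality, so that $-(\gamma_k - \gamma_{k\!+\!1})b_{\Uc}(\cdot)$ plus $\tfrac{\gamma_{k\!+\!1}}{2}\norm{\cdot}^2 \ge -\tfrac12 L_b(\gamma_k - \gamma_{k\!+\!1})$-type scaling of $b_{\Uc}$; assembling gives the stated coefficient $\gamma_{k\!+\!1}\tau_k - L_b(\gamma_k - \gamma_{k\!+\!1})$ times $b_{\Uc}(\uast_{\gamma_{k\!+\!1}}(\Ab^\top\xb^k))$, all multiplied by $\tfrac12(1-\tau_k)$. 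The main obstacle throughout is the delicate sign/coefficient bookkeeping in assembling $R_k$ — in particular matching the factor $\tfrac12$ and the $L_b$ in \eqref{eq:Rk_lower_bound} — rather than any single hard inequality; every ingredient is already available in Lemmas~\ref{le:fbeta_smoothness} and~\ref{le:descent_inequality}, so this is a careful but routine assembly.

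\Eproof
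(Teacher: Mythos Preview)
Your overall architecture (apply the descent estimate of Lemma~\ref{le:descent_inequality} with $\beta_{k+1}=\gamma_{k+1}/\norm{\Ab}^2$, split at the convex combination $(1-\tau_k)\xb^k+\tau_k\xb$, then shift $F_{\gamma_{k+1}}(\xb^k)$ to $F_{\gamma_k}(\xb^k)$ via \eqref{eq:key_est2}) matches the paper. But two of your bookkeeping steps do not produce the stated $R_k$ and its lower bound.

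\textbf{(i) The $\tau_k\gamma_{k+1}b_{\Uc}(\uast_{\gamma_{k+1}}(\Ab^\top\xhat^k))$ term.}
You propose to bound the $\xb$-piece by $\hat\ell^k_{\gamma_{k+1}}(\xb)\le F_{\gamma_{k+1}}(\xb)\le F(\xb)$ using \eqref{eq:key_est21a} and \eqref{eq:key_est1}. That chain sheds, at best, a term $\tfrac{\gamma_{k+1}}{2}\norm{\uast_{\gamma_{k+1}}(\Ab^\top\xb)-\uast_{\gamma_{k+1}}(\Ab^\top\xhat^k)}^2$, which depends on the arbitrary $\xb$ and is \emph{not} $\gamma_{k+1}b_{\Uc}(\uast_{\gamma_{k+1}}(\Ab^\top\xhat^k))$. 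The paper instead expands $\hat\ell^k_{\gamma_{k+1}}(\xb)$ directly from the definition of $f_{\gamma_{k+1}}(\xhat^k)$ and $\nabla f_{\gamma_{k+1}}(\xhat^k)=\Ab\uast_{\gamma_{k+1}}(\Ab^\top\xhat^k)$, obtaining
\[
\hat\ell^k_{\gamma_{k+1}}(\xb)=\iprods{\xb,\Ab\uast_{\gamma_{k+1}}(\Ab^\top\xhat^k)}-\varphi(\uast_{\gamma_{k+1}}(\Ab^\top\xhat^k))-\gamma_{k+1}b_{\Uc}(\uast_{\gamma_{k+1}}(\Ab^\top\xhat^k))+g(\xb)
\le F(\xb)-\gamma_{k+1}b_{\Uc}(\uast_{\gamma_{k+1}}(\Ab^\top\xhat^k)).
\]
It is this direct expansion, not \eqref{eq:key_est21a}, that supplies the first term of $R_k$. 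Without it your assembled remainder will not equal \eqref{eq:Rk_term}.

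\textbf{(ii) The lower bound \eqref{eq:Rk_lower_bound}.}
You suggest dropping the first (nonnegative) term of $R_k$. That would lose the factor $\tau_k$ inside the bracket $[\gamma_{k+1}\tau_k-L_b(\gamma_k-\gamma_{k+1})]$, which comes precisely from keeping that term. The paper keeps all three pieces, applies strong convexity to the first term and $L_b$-smoothness to the second to convert both $b_{\Uc}$-values into squared norms $\norm{\cdot-\ubar^c}^2$, and then \emph{completes the square}: writing $\hat v_k:=\uast_{\gamma_{k+1}}(\Ab^\top\xhat^k)-\ubar^c$ and $v_k:=\uast_{\gamma_{k+1}}(\Ab^\top\xb^k)-\ubar^c$, one gets
\[
2\gamma_{k+1}^{-1}R_k\ \ge\ \tau_k\norm{\hat v_k}^2+(1-\tau_k)\norm{\hat v_k-v_k}^2-(1-\tau_k)(\gamma_{k+1}^{-1}\gamma_k-1)L_b\norm{v_k}^2
=\norm{\hat v_k-(1-\tau_k)v_k}^2+(1-\tau_k)\big[\tau_k-(\gamma_{k+1}^{-1}\gamma_k-1)L_b\big]\norm{v_k}^2,
\]
and only after the square is completed and the nonnegative square dropped does one recover \eqref{eq:Rk_lower_bound}. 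Simply discarding the $\tau_k\gamma_{k+1}b_{\Uc}(\cdot)$ term at the outset leaves a cross term $\norm{\hat v_k-v_k}^2$ that cannot be controlled from below by $\norm{v_k}^2$ alone.
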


Next, we show \textit{one} possibility for updating $\tau_k$ and $\gamma_k$, and provide an upper bound for $F_{\gamma_k}(\xb^k) - \Fopt$.
The proof of this lemma is moved to Appendix \ref{apdx:le:key_estimate2}.

\begin{lemma}\label{le:key_estimate2}
Let us choose $\tilde{\xb}^0 := \xb^0 \in\dom{F}$, $\gamma_1 > 0$, and an arbitrary constant $\bar{c} \geq 1$.
If the parameters $\tau_k$ and $\gamma_k$  are updated by
\vspace{-0.5ex}
\begin{equation}\label{eq:param_update_rules}
\tau_k := \frac{1}{k+\bar{c}} ~~~\text{and}~~~\gamma_{k\!+\!1} := \frac{\gamma_1\bar{c}}{k+\bar{c}}, 
\vspace{-0.5ex}
\end{equation}
then the quantity $R_k$ defined by \eqref{eq:Rk_term} and $\{ (\tau_k, \gamma_k)\}$ satisfy 
\begin{equation}\label{eq:Rk_lower_bound2}
\frac{\gamma_{k\!+\!1}}{\tau_k^2}R_k \geq -\frac{\gamma_1^2\bar{c}^2\left[(L_b\!-\!1)(k \!+\! \bar{c}) \!+\! 1\right]}{(k \!+\! \bar{c})^2}D_{\Uc}~~~~\text{and}~~~~\frac{(1-\tau_k)\gamma_{k\!+\!1}}{\tau_k^2} = \frac{\gamma_k}{\tau_{k-1}^2}.
\end{equation}
Moreover, the following estimate holds
\vspace{-0.5ex}
\begin{equation}\label{eq:key_estimate2_new}
{\!\!\!\!\!\!}F_{\gamma_{k\!+\!1}}(\xb^{k\!+\!1}) \!-\! \Fopt \!\leq\! \frac{\tau_{k}^2}{\gamma_{k\!+\!1}}\!\!\left[\!\frac{(1\!-\!\tau_0)\gamma_1}{\tau_0^2}(F_{\gamma_0}(\xb^0) \!-\! \Fopt) \!+\! \frac{\norm{\Ab}^2}{2}\norm{\xb^0 \!\!-\! \xopt}^2 \!+\! S_kD_{\Uc}\!\right]\!,{\!\!\!\!}
\vspace{-0.5ex}
\end{equation}
where 
\begin{equation}\label{eq:Sk_bound}
\begin{array}{ll}
{\!\!\!}S_k := \gamma_1^2\bar{c}^2\sum_{i=0}^k\left[\frac{(L_b \!-\!1)}{(i \!+\! \bar{c})} \!+\!  \frac{1}{(i \!+\! \bar{c})^2}\right]  \leq \gamma_1^2\bar{c}^2(L_b\!-\!1)\left(\ln(k\!+\!\bar{c}) \!+\! 1\right) \!+\! \gamma_1^2(\bar{c}\!+\!1).
\end{array}
\end{equation}
In particular, if we choose $b_{\Uc}$ such that $L_b = 1$, then $S_k \leq \gamma_1^2(\bar{c}+1)$.
\end{lemma}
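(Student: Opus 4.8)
The plan has three steps: (i) verify the two relations in \eqref{eq:Rk_lower_bound2} by plugging the update rules \eqref{eq:param_update_rules} into the bounds of Lemma~\ref{le:key_estimate1}; (ii) scale the one-step inequality \eqref{eq:key_estimate31} by a suitable factor, telescope it, and read off \eqref{eq:key_estimate2_new}; and (iii) estimate the accumulated quantity $S_k$ by an integral comparison and specialize to $L_b=1$.

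\emph{Step 1 (the relations in \eqref{eq:Rk_lower_bound2}).} With $\tau_k=1/(k+\bar c)$ and $\gamma_{k+1}=\gamma_1\bar c/(k+\bar c)$, direct computation gives $\gamma_{k+1}/\tau_k^2=\gamma_1\bar c(k+\bar c)$, $1-\tau_k=(k+\bar c-1)/(k+\bar c)$, $\gamma_{k+1}\tau_k=\gamma_1\bar c/(k+\bar c)^2$ and $\gamma_k-\gamma_{k+1}=\gamma_1\bar c/\big((k+\bar c-1)(k+\bar c)\big)$. The first two identities yield $(1-\tau_k)\gamma_{k+1}/\tau_k^2=\gamma_1\bar c(k+\bar c-1)=\gamma_k/\tau_{k-1}^2$, which is the second equality in \eqref{eq:Rk_lower_bound2}. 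For the first inequality, observe that $\gamma_{k+1}\tau_k/\big(L_b(\gamma_k-\gamma_{k+1})\big)=(k+\bar c-1)/\big(L_b(k+\bar c)\big)\le 1$ since $L_b\ge 1$, so the bracket $\gamma_{k+1}\tau_k-L_b(\gamma_k-\gamma_{k+1})$ in \eqref{eq:Rk_lower_bound} is nonpositive. Because $\uast_{\gamma_{k+1}}(\Ab^{\top}\xb^k)\in\Uc\cap\dom{\varphi}$ we have $0\le b_{\Uc}(\uast_{\gamma_{k+1}}(\Ab^{\top}\xb^k))\le D_{\Uc}$, and replacing $b_{\Uc}(\cdot)$ by $D_{\Uc}$ in \eqref{eq:Rk_lower_bound} only decreases the (nonpositive) right-hand side. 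Multiplying the result by $\gamma_{k+1}/\tau_k^2>0$ and simplifying then gives the first inequality in \eqref{eq:Rk_lower_bound2} — in fact with an extra factor $\tfrac12$ in front, which I would simply drop.

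\emph{Step 2 (telescoping).} Apply \eqref{eq:key_estimate31} with $\xb=\xopt$, use $F(\xopt)=\Fopt$, subtract $\Fopt$ from both sides so that the right-hand side contains $(1-\tau_k)\big(F_{\gamma_k}(\xb^k)-\Fopt\big)$, and multiply through by $\gamma_{k+1}/\tau_k^2$. Setting $E_j:=\big(\gamma_j/\tau_{j-1}^2\big)\big(F_{\gamma_j}(\xb^j)-\Fopt\big)$ for $j\ge 1$ and $E_0:=\big((1-\tau_0)\gamma_1/\tau_0^2\big)\big(F_{\gamma_0}(\xb^0)-\Fopt\big)$, the second identity of Step~1 turns the scaled inequality into
\[
E_{j+1}\ \le\ E_j+\tfrac{\norm{\Ab}^2}{2}\Big(\norm{\tilde{\xb}^j-\xopt}^2-\norm{\tilde{\xb}^{j+1}-\xopt}^2\Big)-\tfrac{\gamma_{j+1}}{\tau_j^2}R_j .
\]
Summing this for $j=0,\dots,k$, the $\tilde{\xb}$-terms telescope, $\tilde{\xb}^0=\xb^0$, the term $-\norm{\tilde{\xb}^{k+1}-\xopt}^2\le 0$ is discarded, and the $R_j$-terms are controlled from below by Step~1; the sum of the resulting upper bounds is exactly $S_kD_{\Uc}$ with $S_k=\gamma_1^2\bar c^2\sum_{i=0}^{k}\big[\tfrac{L_b-1}{i+\bar c}+\tfrac{1}{(i+\bar c)^2}\big]$. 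Dividing by $\gamma_{k+1}/\tau_k^2$ produces \eqref{eq:key_estimate2_new}. When $\bar c=1$ one has $1-\tau_0=0$, so $E_0=0$ and the term involving $F_{\gamma_0}$ (hence $\gamma_0$) disappears, which resolves the only boundary subtlety.

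\emph{Step 3 ($S_k$, and the main obstacle).} Since $\bar c\ge 1$, monotonicity of the integrands gives $\sum_{i=0}^{k}\tfrac{1}{i+\bar c}\le 1+\int_0^{k}\tfrac{dt}{t+\bar c}\le\ln(k+\bar c)+1$ and $\sum_{i=0}^{k}\tfrac{1}{(i+\bar c)^2}\le\tfrac1{\bar c^2}+\int_0^{\infty}\tfrac{dt}{(t+\bar c)^2}=\tfrac{\bar c+1}{\bar c^2}$; multiplying by $\gamma_1^2\bar c^2$ yields \eqref{eq:Sk_bound}, and setting $L_b=1$ removes the logarithmic term. The delicate points are the sign bookkeeping when passing from \eqref{eq:Rk_lower_bound} to \eqref{eq:Rk_lower_bound2} (the bracket is nonpositive, so $b_{\Uc}$ must be bounded from \emph{above} by $D_{\Uc}$) and the correct treatment of the $j=0$ boundary term $E_0$; but the genuinely load-bearing observation — the one that makes the whole argument close — is that the scaling factor $\gamma_{k+1}/\tau_k^2$ forced by the update rules \eqref{eq:param_update_rules} is precisely the one that converts the one-step descent \eqref{eq:key_estimate31} into a clean telescoping recursion, which is exactly the content of the second identity in \eqref{eq:Rk_lower_bound2}.
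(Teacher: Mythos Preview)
Your proposal is correct and follows essentially the same route as the paper's proof: both compute $m_k$ (your bracket times the scaling factor) directly from \eqref{eq:param_update_rules}, use $b_{\Uc}(\cdot)\le D_{\Uc}$ together with the nonpositivity of that bracket to get the first relation in \eqref{eq:Rk_lower_bound2}, verify the telescoping identity $(1-\tau_k)\gamma_{k+1}/\tau_k^2=\gamma_k/\tau_{k-1}^2$, then scale \eqref{eq:key_estimate31} by $\gamma_{k+1}/\tau_k^2$ and sum to obtain \eqref{eq:key_estimate2_new}, finally bounding $S_k$ by elementary sum estimates. Your observation about the spare factor $\tfrac12$ is correct (the paper silently absorbs it as well), and your integral comparison for $S_k$ is a mild cosmetic variant of the paper's harmonic--sum and partial--fraction bounds; the arguments are otherwise the same.
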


By \eqref{eq:param_update_rules}, the second line of \eqref{eq:acc_grad_scheme2} reduces to $\hat{\xb}^{k\!+\!1} := \xb^{k\!+\!1} + \left(\frac{k+\bar{c}-1}{k+\bar{c}+1}\right)(\xb^{k\!+\!1} - \xb^k)$.
Using this step into \eqref{eq:acc_grad_scheme2} and combining the result with the update rule \eqref{eq:param_update_rules}, we can present our algorithm for solving \eqref{eq:com_min} as in Algorithm \ref{alg:A1}.

\begin{algorithm}[ht!]\caption{(\textit{Adaptive Smoothing Proximal-Gradient Algorithm})}\label{alg:A1}
\begin{normalsize}
\begin{algorithmic}[1]
\Statex{\hskip-4ex}\textbf{Initialization:} 
\State\label{step:1}Choose $\gamma_1 > 0$, $\bar{c} \geq 1$ and $\xb^0 \in\R^p$. Set $\xhat^0 := \xb^0$.
\Statex{\hskip-4ex}\textbf{Iteration:} \textbf{For}~{$k=0$ {\bfseries to} $k_{\max}$, \textbf{perform:}}
\State\label{step:2}Solve the following strongly concave maximization subproblem
\begin{equation*}
\hat{\ub}^{\ast}_{\gamma_{k\!+\!1}}(\xhat^k) := \argmax_{\ub\in\Uc}\Big\{\iprods{\xhat^k, \Ab\ub} - \varphi(\ub) - \gamma_{k\!+\!1}b_{\Uc}(\ub )\Big\}.
\end{equation*}
\State\label{step:3}Perform the following proximal-gradient step with $\beta_{k\!+\!1} := \frac{\gamma_{k\!+\!1}}{\norm{\Ab}^2}$:
\begin{equation*}
\xb^{k\!+\!1} := \prox_{\beta_{k\!+\!1}g}\left(\xhat^k - \beta_{k\!+\!1}\Ab\hat{\ub}^{\ast}_{\gamma_{k\!+\!1}}(\xhat^k)\right).
\end{equation*}
\State\label{step:4}Update  $\xhat^{k\!+\!1}  := \xb^{k\!+\!1} + \left(\frac{k+\bar{c} - 1}{k + \bar{c} + 1}\right)(\xb^{k\!+\!1} -  \xb^k)$.
\State\label{step:5}Compute $\gamma_{k+2} := \frac{\bar{c}\gamma_1}{k+\bar{c}+1}$.
\Statex{\hskip-4ex}\textbf{End~for}
\end{algorithmic}
\end{normalsize}
\end{algorithm}

The following theorem proves the convergence of Algorithm \ref{alg:A1} and estimates its worst-case iteration-complexity.

\vspace{-0.75ex}
\begin{theorem}\label{th:convergence_v2}
Let $\{\xb^k\}$ be the sequence generated by Algorithm~\ref{alg:A1} using $\bar{c} = 1$.
Then, for $k\geq 1$, we have
\begin{equation}\label{eq:convergence_rate_v2a}
F(\xb^{k}) - \Fopt \leq  \frac{\norm{\Ab}^2\Vert\xb^0 \!\!-\! \xopt\Vert^2}{2\gamma_1k}   + \frac{3\gamma_1D_{\Uc}}{k} + \frac{\gamma_1(L_b-1)\left(\ln(k) + 1\right) D_{\Uc}}{k}.
\end{equation}
If $b_{\Uc}$ is chosen so that $L_b = 1$ $($e.g., $b_{\Uc}(\cdot) := \frac{1}{2}\Vert\cdot-\bar{\ub}^c\Vert^2$$)$, then \eqref{eq:convergence_rate_v2a} reduces to
\vspace{-0.5ex}
\begin{align}\label{eq:convergence_rate2_v2}
F(\xb^{k}) - \Fopt &\leq  \frac{\norm{\Ab}^2\Vert\xb^0 - \xopt\Vert^2}{2\gamma_1k} + \frac{3\gamma_1D_{\Uc}}{k}, ~~(\forall k\geq 1).
\vspace{-0.5ex}
\end{align}
Consequently, if we set  $\gamma_1 := \frac{R_0\norm{\Ab}}{\sqrt{6D_{\Uc}}}$, which is independent of $k$, then
\begin{align}\label{eq:worst_case2_v2}
F(\xb^{k}) - \Fopt  \leq  \frac{R_0\norm{\Ab}\sqrt{6D_{\Uc}}}{k}~~~(\forall k\geq 1),
\end{align}
where $R_0 := \norm{\xb^0 - \xopt}$.

In this case, the worst-case iteration-complexity of Algorithm \ref{alg:A1} to achieve an $\varepsilon$-solution $\xb^{k}$ to \eqref{eq:com_min} such that $F(\xb^{k})-\Fopt\leq\varepsilon$ is $k_{\max} := \mathcal{O}\Big(\frac{R_0\norm{\Ab}\sqrt{D_{\Uc}}}{\varepsilon}\Big)$.
\end{theorem}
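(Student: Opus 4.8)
The plan is to chain together the estimates already established in Lemmas \ref{le:key_estimate1} and \ref{le:key_estimate2} and then convert the bound on the \emph{smoothed} objective residual $F_{\gamma_{k+1}}(\xb^{k+1}) - \Fopt$ into a bound on the \emph{true} residual $F(\xb^k) - \Fopt$ by paying the smoothing error $\gamma_k D_{\Uc}$ from \eqref{eq:key_est1}. Concretely, I would first specialize the master estimate \eqref{eq:key_estimate2_new} to $\bar{c} = 1$, which gives $\tau_k = \frac{1}{k+1}$ and $\gamma_{k+1} = \frac{\gamma_1}{k+1}$, so that $\frac{\tau_k^2}{\gamma_{k+1}} = \frac{1}{(k+1)^2}\cdot\frac{k+1}{\gamma_1} = \frac{1}{\gamma_1(k+1)}$. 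The subtle point at the very start is the term $\frac{(1-\tau_0)\gamma_1}{\tau_0^2}(F_{\gamma_0}(\xb^0) - \Fopt)$: with the convention $\tau_0 = 1$ (equivalently $\bar{c}=1$ forces the telescoping in the second identity of \eqref{eq:Rk_lower_bound2} to start cleanly) this term vanishes, so the bracket in \eqref{eq:key_estimate2_new} collapses to $\frac{\norm{\Ab}^2}{2}\norm{\xb^0-\xopt}^2 + S_kD_{\Uc}$.

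Next I would substitute the bound on $S_k$ from \eqref{eq:Sk_bound}. With $\bar{c}=1$ this reads $S_k \leq \gamma_1^2(L_b-1)(\ln(k+1)+1) + 2\gamma_1^2$. Plugging into \eqref{eq:key_estimate2_new} and reindexing $k+1 \rightsquigarrow k$ yields, for $k \geq 1$,
\begin{equation*}
F_{\gamma_{k}}(\xb^{k}) - \Fopt \leq \frac{\norm{\Ab}^2\norm{\xb^0-\xopt}^2}{2\gamma_1 k} + \frac{2\gamma_1 D_{\Uc}}{k} + \frac{\gamma_1(L_b-1)(\ln k + 1)D_{\Uc}}{k}.
\end{equation*}
Then I invoke the right-hand inequality of \eqref{eq:key_est1}, namely $f(\xb^k) \leq f_{\gamma_k}(\xb^k) + \gamma_k D_{\Uc}$, which gives $F(\xb^k) \leq F_{\gamma_k}(\xb^k) + \gamma_k D_{\Uc} = F_{\gamma_k}(\xb^k) + \frac{\gamma_1}{k}D_{\Uc}$ (using $\gamma_k = \frac{\gamma_1}{k}$ for $\bar{c}=1$). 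Adding this $\frac{\gamma_1 D_{\Uc}}{k}$ to the previous display turns the coefficient $2$ into $3$ and produces exactly \eqref{eq:convergence_rate_v2a}. Setting $L_b = 1$ kills the logarithmic term and gives \eqref{eq:convergence_rate2_v2}.

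For the last two assertions, I would minimize the right-hand side of \eqref{eq:convergence_rate2_v2} over $\gamma_1 > 0$: the function $\gamma_1 \mapsto \frac{a}{\gamma_1} + b\gamma_1$ with $a = \frac{\norm{\Ab}^2 R_0^2}{2k}$ and $b = \frac{3D_{\Uc}}{k}$ is minimized at $\gamma_1 = \sqrt{a/b} = \frac{R_0\norm{\Ab}}{\sqrt{6 D_{\Uc}}}$, with minimum value $2\sqrt{ab} = \frac{R_0\norm{\Ab}\sqrt{6 D_{\Uc}}}{k}$, which is \eqref{eq:worst_case2_v2}; note this optimal $\gamma_1$ indeed does not depend on $k$, so it is a legitimate a-priori choice. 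Finally, requiring $\frac{R_0\norm{\Ab}\sqrt{6 D_{\Uc}}}{k} \leq \varepsilon$ gives $k \geq \frac{R_0\norm{\Ab}\sqrt{6D_{\Uc}}}{\varepsilon}$, i.e. $k_{\max} = \mathcal{O}\big(\frac{R_0\norm{\Ab}\sqrt{D_{\Uc}}}{\varepsilon}\big)$. The only real obstacle is bookkeeping: making sure the $\bar{c}=1$ specialization of the index shifts in Lemma \ref{le:key_estimate2} is done consistently (in particular that the initial $F_{\gamma_0}$ term genuinely drops out and that the sum defining $S_k$ starts at $i=0$), since an off-by-one there would change the constant $3$. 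Everything else is substitution and a one-variable optimization.
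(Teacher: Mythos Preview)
Your proposal is correct and follows essentially the same approach as the paper's proof: specialize \eqref{eq:key_estimate2_new} with $\bar c=1$ so that $\tau_0=1$ kills the $F_{\gamma_0}$ term, insert the bound \eqref{eq:Sk_bound} on $S_{k-1}$, add the smoothing error $\gamma_k D_{\Uc}$ from \eqref{eq:key_est1}, and then optimize over $\gamma_1$. Your bookkeeping of the index shift and the constant $3$ matches the paper exactly.
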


\begin{proof}
From \eqref{eq:param_update_rules}, $\bar{c} = 1$  we have $\frac{\tau_{k-1}^2}{\gamma_k} = \frac{(k+\bar{c}-1)}{\bar{c}\gamma_1(k+\bar{c}-1)^2}  = \frac{1}{\gamma_1k}$. Using this bound and $S_{k-1} \leq \gamma_1^2(L_b-1)\left[\ln(k) + 1\right] + 2\gamma_1^2$ into \eqref{eq:key_estimate2_new} we get
\begin{align*}
F_{\gamma_k}(\xb^k) - \Fopt &\leq \frac{1}{\gamma_1k}\left[\frac{\norm{\Ab}^2}{2}\Vert\xb^0 - \xopt\Vert^2 +\frac{\gamma_1(1-\tau_0)}{\tau_0^2}[F_{\gamma_0}(\xb^0) - \Fopt]\right]\\
&+ \frac{\left(\gamma_1(L_b-1)\left[\ln(k) + 1\right] + 2\gamma_1\right)D_{\Uc}}{k}.
\end{align*}
Since $F(\xb^k) - F_{\gamma_k}(\xb^k) \leq \gamma_kD_{\Uc}$ due to \eqref{eq:key_est1}, and $\gamma_k = \frac{\gamma_1\bar{c}}{k+\bar{c}-1} = \frac{\gamma_1}{k}$. Substituting this inequality into the last estimate, and using $\tau_0 = \frac{1}{\bar{c}} = 1$, we obtain \eqref{eq:convergence_rate_v2a}.

If we choose $b_{\Uc}$ such that $L_b = 1$, e.g., $b_{\Uc}(\cdot) := (1/2)\Vert\cdot-\ubar^c\Vert^2$, then  $S_k \leq 2\gamma_1^2$ as shown in \eqref{eq:Sk_bound}. 
Using this, it follows from \eqref{eq:convergence_rate_v2a} that $F(\xb^k) - \Fopt \leq \frac{\norm{\Ab}}{2\gamma_1k}R_0^2 + \frac{3\gamma_1}{k\!+\!1}D_{\Uc}$. 
By minimizing the right hand side of this estimate w.r.t $\gamma_1 > 0$, we have $\gamma_1 := \frac{R_0\norm{\Ab}}{\sqrt{6D_{\Uc}}}$ and hence, $F(\xb^{k}) \!-\! \Fopt  \leq  \frac{R_0\norm{\Ab}\sqrt{6D_{\Uc}}}{k}$, which is exactly \eqref{eq:worst_case2_v2}.
The last statement is a direct consequence of \eqref{eq:worst_case2_v2}.
\Eproof
\end{proof}

For general prox-function $b_{\Uc}$ with $L_b > 1$, Theorem \ref{th:convergence_v2} shows that the convergence rate of Algorithm \ref{alg:A1} is $\mathcal{O}\left(\frac{\ln(k)}{k}\right)$, which is similar to \cite{boct2012variable}. However, when $L_b$ is close to $1$,  the last term in \eqref{eq:convergence_rate_v2a} is better than \cite[Theorem 1]{boct2012variable}.

\vspace{-1ex}
\begin{remark}\label{re:compare}
Let $b_{\Uc}(\cdot) := (1/2)\Vert\cdot-\ubar^c\Vert^2$. Then, \eqref{eq:worst_case2_v2} shows that the number of maximum iterations in Algorithm \ref{alg:A1} is $k_{\max} :=  \left\lfloor\frac{R_0\Vert A\Vert\sqrt{6D_{\Uc}}}{\varepsilon}\right\rfloor$, which is the same, $k_{\max} := \left\lfloor \frac{2\sqrt{2}\norm{\Ab}R_0\sqrt{D_{\Uc}}}{\varepsilon}\right\rfloor$,  as in \eqref{eq:nes_bound} (with different factors, $\sqrt{6}$ and $2\sqrt{2}$).
\end{remark}

\vspace{-5ex}
\section{Exploiting structures for special cases}\label{sec:exploit_struct}
\vspace{-2ex}
For general smooth proximity function $b_{\Uc}$ with $L_b > 1$, we can achieve the $\mathcal{O}\left(\frac{(L_b - 1)\ln(k)}{k}\right)$ convergence rate. When $L_b = 1$, we obtain exactly the $\mathcal{O}\left(\frac{1}{k}\right)$ rate as in \cite{Nesterov2005c}.
In this section, we consider three special cases of \eqref{eq:com_min} where we use the quadratic proximity function $b_{\Uc}(\cdot) := (1/2)\norm{\cdot -  \bar{\ub}^c}^2$.
Then, we specify Algorithm~\ref{alg:A1} for the $L_g$-smooth objective function $g$ in \eqref{eq:com_min}.

\vspace{-3.5ex}
\subsection{Fenchel conjugate}
\vspace{-2.5ex}
Let $f^{*}$ be the Fenchel conjugate of $f$.
We can write $f$ in the form of \eqref{eq:fx_structure} as
\begin{equation*}
f(\xb) = \max_{\ub}\set{ \iprods{\xb,\ub} - f^{\ast}(\ub) : \ub\in\dom{f^{\ast}} }.
\end{equation*}
We can smooth $f$ by using $b_{\Uc}(\ub) := (1/2)\norm{\ub}_2^2$ as
\begin{equation*}
f_{\gamma}(\xb) := \max_{ \ub\in\dom{f^{\ast}} }\set{ \iprods{\xb,\ub} - f^{\ast}(\ub) - (\gamma/2)\norm{\ub}_2^2  }  = \frac{\norm{\xb}^2}{2\gamma} -{}^{\gamma^{-1}}f^{\ast}(\gamma^{-1}\xb),
\end{equation*}
where ${}^\beta{h}$ is the Moreau envelope of a convex function $h$ with a parameter $\beta$ \cite{Bauschke2011}.
In this case, $\ub^{\ast}_{\gamma}(\xb) = \prox_{\gamma^{-1}f^{\ast}}( \gamma^{-1}\xb) = \gamma^{-1}(\xb - \prox_{\gamma f}(\xb))$. 
Hence, $\nabla{f_{\gamma}}(\xb) = \gamma^{-1}(\xb - \prox_{\gamma f}(\xb))$. 
The main step, Step \ref{step:3}, of Algorithm \ref{alg:A1} becomes
\begin{equation*}
\xb^{k\!+\!1} = \prox_{\gamma_{k\!+\!1}g}\big( \prox_{\gamma_{k\!+\!1}f}(\hat{\xb}^k) \big).
\end{equation*}
Hence, Algorithm \ref{alg:A1} can be applied to solve \eqref{eq:com_min} using the proximal operator of $f$ and $g$.
The worst-case complexity bound in Theorem \ref{th:convergence_v2} becomes $\mathcal{O}\left( \frac{D_{\dom{f^{\ast}}}R_0}{\varepsilon}\right)$, where $D_{\dom{f^{\ast}}} := \displaystyle\max_{\ub\in\dom{f^{\ast}}} \norm{\ub}$ is the diameter of $\dom{f^{\ast}}$.

\vspace{-3.5ex}
\subsection{Composite convex minimization with linear operator}
\vspace{-2.5ex}
We consider the following composite convex  problem with a linear operator that covers many important applications in practice, see, e.g.,~\cite{argyriou2014hybrid,Bauschke2011,Combettes2011a}:
\begin{equation}\label{eq:composite_cvx}
F^{\star} := \min_{\xb\in\R^p}\set{ F(\xb) := f(\Ab\xb) + g(\xb)},
\end{equation}
where $f$ and $g$ are two proper, closed and convex functions, and $\Ab$ is a linear operator from $\R^p$ to $\R^n$.

We first write $f(\Ab\xb) := \max_{\ub}\set{\iprods{\Ab\xb, \ub} - f^{\ast}(\ub) : \ub\in\dom{f^{\ast}}}$. 
Next, we choose a quadratic smoothing proximity function $b_{\Uc}(\ub) := (1/2)\norm{\ub - \bar{\ub}^c}^2$ for fixed $\bar{\ub}^c\in\dom{f^{\ast}}$, and define  $\Uc := \dom{f^{\ast}}$.
Using this smoothing prox-function, we obtain a smoothed approximation of $f(\Ab\xb)$ as follows:
\begin{equation*}
f_{\gamma}(\Ab\xb) := \max_{\ub}\set{\iprods{\Ab\xb, \ub} - f^{\ast}(\ub) - (\gamma/2)\norm{\ub - \bar{\ub}^c}^2 : \ub\in\dom{f^{\ast}}}.
\end{equation*}
In this case, we can compute $\ub^{\ast}_{\gamma}(\Ab\xb) = \prox_{\gamma^{-1}f^{\ast}}\left(\ubar^c + \gamma^{-1}\Ab\xb\right)$ by using the proximal operator of $f^{\ast}$.
By Fenchel-Moreau's decomposition $\prox_{\gamma^{-1}f^{\ast}}(\gamma^{-1}\vb) = \gamma^{-1}(\vb -  \prox_{\gamma f}(\gamma \vb))$ as above, we can compute $\prox_{\gamma^{-1}f^{\ast}}$ using the proximal operator of $f$.
In this case, we can specify the proximal-gradient step~\eqref{eq:prox_grad_step} as
\begin{equation*}
\left\{\begin{array}{ll}
\hat{\ub}^{\ast}_k &:= \prox_{\gamma_{k+1}^{-1}f^{\ast}}\left(\ubar^c + \gamma_{k+1}^{-1}\Ab\hat{\xb}^k\right) \vspace{0.5ex}\\
& = \ubar^c + \gamma_{k+1}^{-1}\left(\Ab\hat{\xb}^k - \prox_{\gamma_{k+1}f}\big(\gamma_{k+1}\ubar^c + \Ab\hat{\xb}^k\big)\right)\vspace{0.75ex}\\
\xb^{k+1} &:= \prox_{\beta_{k+1} g}\left(\hat{\xb}^k - \beta_{k+1}\Ab^{\top}\hat{\ub}^{\ast}_k\right),
\end{array}\right.
\end{equation*}
where $\beta_{k+1} := \gamma_{k+1}\norm{\Ab}^{-2}$.
Using this proximal gradient step in Algorithm~\ref{alg:A1}, we still obtain the complexity as in Theorem~\ref{th:convergence_v2}, which is $\mathcal{O}\Big(\frac{\norm{\xb^0 - \xopt}\norm{\Ab}\sqrt{D_{\Uc}}}{\varepsilon}\Big)$, where the domain $\Uc := \dom{f^{\ast}}$ of $f^{\ast}$ is assumed to be bounded.

\vspace{-3.5ex}
\subsection{The decomposable structure}\label{subsec:decomposable}
\vspace{-2.5ex}
The function $\varphi$ and the set $\Uc$ in \eqref{eq:fx_structure} are said to be \textit{decomposable} if they can be represented as follows:
\vspace{-0.5ex}
\begin{equation}\label{eq:decom_structure}
\varphi(\ub) := \sum_{i=1}^m\varphi_i(\ub_i), ~~~\text{and}~~~ \Uc := \Uc_1\times \cdots\times \Uc_m,
\vspace{-0.5ex}
\end{equation}
where $m\geq 2$, $\ub_i \in \R^{n_i}$, $\Uc_i\subseteq\R^{n_i}$ and $\sum_{i=1}^mn_i = n$.
In this case, we also say that problem \eqref{eq:com_min} is \textit{decomposable}.

The structure \eqref{eq:decom_structure} naturally arises in linear programming and monotropic programming.
In addition, many nondecomposable problems such as consensus optimization, empirical loss optimization, conic programming and geometric programming can also be reformulated into \eqref{eq:com_min} with the structure \eqref{eq:decom_structure}. 
The decomposable structure \eqref{eq:decom_structure} immediately supports  parallel and distributed computation.
Exploiting this structure, one can design new parallel and distributed optimization algorithms using the same approach as in  Algorithm \ref{alg:A1} for solving \eqref{eq:com_min}, see, e.g., \cite{Bertsekas1989b,Boyd2011,Rockafellar1985,TranDinh2012g}. 

Under the structure \eqref{eq:decom_structure},  we choose a decomposable smoothing function $b_{\Uc}(\ub) := \sum_{i=1}^mb_{\Uc_i}(\ub_i)$, where $b_{\Uc_i}$ is the prox-function of $\Uc_i$ for $i=1,\cdots,m$.
The smoothed function $f_{\gamma}$ for $f$ is decomposable, and is represented as follows:
\vspace{-0.5ex}
\begin{equation}\label{eq:decom_subprob} 
f_{\gamma}(\xb) := \sum_{i=1}^m\set{ f_{\gamma}^i(\xb) := \displaystyle\max_{\ub_i\in\Uc_i}\set{\iprods{\xb, \Ab_i\ub_i} - \varphi_i(\ub_i) - \gamma b_{\Uc_i}(\ub_i)} }.
\vspace{-0.5ex}
\end{equation}
Let us denote by $\uast_{\gamma,i}(\Ab_i^{\top}\xb)$ the unique solution of the subproblem $i$ in \eqref{eq:decom_subprob} for $i=1,\cdots, m$.
Then, under the decomposable structure, the evaluation of $f_{\gamma}$ and $\uast_{\gamma}(\Ab^{\top}\xb) := [\uast_{\gamma,1}(\Ab_1^{\top}\xb),\cdots, \uast_{\gamma,m}(\Ab_m^{\top}\xb)]$ can be computed in parallel.

If we apply Algorithm \ref{alg:A1} to solve \eqref{eq:com_min} with the structure \eqref{eq:decom_structure}, then we have the following guarantee on the objective residual:
\begin{equation*} 
\begin{array}{ll}
F(\xb^{k}) - \Fopt &\leq   \frac{L_{\Ab}\Vert\xb^0 \!\!-\! \xopt\Vert^2}{2\gamma_1k}   + \frac{\gamma_1D_{\Uc}}{k}\left(3 + (L_b -1)\left(\ln(k+1) + 1\right)\right),
\end{array}
\end{equation*}
where $L_{\Ab} := \sum_{i=1}^m\norm{\Ab_i}^2$, $L_b := \max\set{ L_{b_i} : 1\leq i\leq m}$ and $D_{\Uc} := \sum_{i=1}^mD_{\Uc_i}$.
Hence, the convergence rate of Algorithm \ref{alg:A1}  stated in Theorem \ref{th:convergence_v2} is $\mathcal{O}\left(\frac{\ln(k)}{k}\right)$.
If we choose $b_{\Uc_i}(\cdot) := (1/2)\Vert\cdot-\ubar^c_i\Vert^2$ for all $i=1,\cdots, m$, then $L_b = 1$.
Consequently, we obtain the $\mathcal{O}\left(\frac{L_{\Ab}R_0\sqrt{D_{\Uc}}}{\varepsilon}\right)$-worst-case iteration-complexity.

\vspace{-3.5ex}
\subsection{The Lipschitz gradient structure}
\vspace{-2.5ex}
If $g$ is smooth and its gradient  $\nabla{g}$ is Lipschitz continuous with the Lipschitz constant $L_g > 0$, then $F_{\gamma} := f_{\gamma} + g\in\Fc_{L}^{1,1}$, i.e., $\nabla{F_{\gamma}}$ is Lipschitz continuous with the Lipschitz constant $L_{F_{\gamma}} :=  L_g + \gamma^{-1}\norm{\Ab}^2$.

We replace the proximal-gradient step \eqref{eq:prox_grad_step} using in Algorithm~\ref{alg:A1} by the following ``full'' gradient step
\vspace{-0.5ex}
\begin{equation}\label{eq:grad_step}
\xb^{k\!+\!1} := \xhat^k  -\beta_{k\!+\!1}\big(\nabla{g}(\xhat^k) + \Ab\ub^{\ast}_{\gamma_{k\!+\!1}}(\Ab^{\top}\xhat^k)\big),
\vspace{-0.5ex}
\end{equation}
where $\uast_{\gamma_{k\!+\!1}}(\Ab^{\top}\xhat^k)$ is computed by \eqref{eq:uast_beta} and $\beta_{k\!+\!1} :=  \frac{1}{L_g + \gamma_{k\!+\!1}^{-1}\norm{\Ab}^2}$ is a given step-size.
Unlike \eqref{eq:param_update_rules}, we update  the parameters $\tau_k$ and $\gamma_k$ as
\vspace{-0.5ex}
\begin{equation*}
\tau_k := \frac{1}{k+1}~~~\text{and}~~~~\gamma_{k+1} := \frac{k\gamma_k\norm{\Ab}^2}{L_g\gamma_k + \norm{\Ab}^2(k+1)},
\vspace{-0.5ex}
\end{equation*}
where $\gamma_1 := \frac{\norm{\Ab}^2}{L_g}$ is fixed.
We name this variant  as Algorithm \ref{alg:A1}(b).

The following corollary summarizes the convergence properties of this variant, whose proof can be found in Appendix~\ref{apdx:co:update_param2}.

\begin{corollary}\label{co:update_param2}
Assume that $g\in\Fc_L^{1,1}$ with the Lipschitz constant $L_g \geq 0$.
Let $\{ \xb^k\}$ be the sequence generated by Algorithm $\mathrm{\ref{alg:A1}(b)}$. Then, for $k\geq 1$, one has
\vspace{-0.5ex}
\begin{equation}\label{eq:convergence5}
{\!\!\!\!\!}\begin{array}{ll}
{\!\!\!\!}F(\xb^k) \!-\! F^{\star} \!\leq\! \frac{3L_g}{2k}\norm{\xb^0 \!-\! \xopt}^2 + \frac{\norm{\Ab}^2}{L_gk}\left(\frac{2L_b}{L_g} \!+\! 1\right)D_{\Uc} 
\! +\!  \frac{(L_b \!-\! 1)\norm{\Ab}^2}{L_g^2k}\left(\ln(k) \!+\! 1\right)D_{\Uc}.
\end{array}{\!\!\!\!\!}
\vspace{-0.5ex}
\end{equation}
If we choose $b_{\Uc}$ such that $L_b = 1$, then \eqref{eq:convergence5} reduces to 
\vspace{-0.5ex}
\begin{equation*}
F(\xb^k) - F^{\star} \leq \frac{3L_g}{2k}\norm{\xb^0 - \xopt}^2 + \frac{\norm{\Ab}^2}{L_g^2k}(L_g + 2)D_{\Uc}.
\vspace{-0.5ex}
\end{equation*}
Consequently, the worst-case iteration-complexity of Algorithm $\mathrm{\ref{alg:A1}(b)}$ is $\mathcal{O}\Big(\frac{1}{\varepsilon}\Big)$.
\end{corollary}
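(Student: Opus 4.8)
\textbf{Proof proposal for Corollary~\ref{co:update_param2}.}

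The plan is to mirror the analysis that led to Theorem~\ref{th:convergence_v2}, but replace the proximal-gradient descent inequality of Lemma~\ref{le:descent_inequality} with the analogous inequality for the ``full'' gradient step \eqref{eq:grad_step} applied to $F_\gamma = f_\gamma + g$, whose gradient is Lipschitz with constant $L_{F_\gamma} = L_g + \gamma^{-1}\norm{\Ab}^2$. Concretely, I would first establish a descent estimate of the form
\begin{equation*}
F_{\gamma_{k\!+\!1}}(\xb^{k\!+\!1}) \leq (1-\tau_k)F_{\gamma_k}(\xb^k) + \tau_k F(\xb) + \frac{1}{2\beta_{k\!+\!1}}\big[\norm{\tilde\xb^k - \xb}^2 - \norm{\tilde\xb^{k\!+\!1} - \xb}^2\big] - R_k,
\end{equation*}
exactly as in Lemma~\ref{le:key_estimate1}, where now $\beta_{k\!+\!1}^{-1} = L_g + \gamma_{k\!+\!1}^{-1}\norm{\Ab}^2$ and $R_k$ is the same remainder term \eqref{eq:Rk_term}, bounded below by \eqref{eq:Rk_lower_bound}. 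The key algebraic point is that the new update rule $\gamma_{k\!+\!1} := \frac{k\gamma_k\norm{\Ab}^2}{L_g\gamma_k + \norm{\Ab}^2(k+1)}$ with $\tau_k = \frac{1}{k+1}$ is precisely the rule that makes the ``telescoping'' identity $\frac{(1-\tau_k)}{\tau_k^2}\beta_{k\!+\!1}^{-1} = \frac{1}{\tau_{k-1}^2}\beta_k^{-1}$ hold, which is the analogue of the second identity in \eqref{eq:Rk_lower_bound2}; I would verify this directly by substituting the definitions of $\tau_k$, $\gamma_{k\!+\!1}$, and $\beta_{k\!+\!1}$.

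Next, I would multiply the descent inequality by $\frac{1}{\tau_k^2}\beta_{k\!+\!1}^{-1}$ and sum from $i=0$ to $k-1$. The telescoping identity collapses the $F_{\gamma}$ terms and the $\norm{\tilde\xb^i - \xb}^2$ terms in the standard FISTA-style manner, leaving
\begin{equation*}
\frac{\beta_{k}^{-1}}{\tau_{k-1}^2}\big(F_{\gamma_k}(\xb^k) - \Fopt\big) \leq \frac{1}{2}\norm{\xb^0 - \xopt}^2 + \frac{(1-\tau_0)\beta_1^{-1}}{\tau_0^2}\big(F_{\gamma_0}(\xb^0) - \Fopt\big) + \tilde S_k D_{\Uc},
\end{equation*}
where $\tilde S_k$ collects the lower bounds on $-R_i$ from \eqref{eq:Rk_lower_bound}, i.e.\ terms of the form $\frac{1}{2}(1-\tau_i)\big[L_b(\gamma_i - \gamma_{i\!+\!1}) - \gamma_{i\!+\!1}\tau_i\big]\frac{\beta_{i\!+\!1}^{-1}}{\tau_i^2}D_{\Uc}$ wherever that sign is unfavourable. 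With $\tau_0 = 1$ the $F_{\gamma_0}(\xb^0) - \Fopt$ contribution drops out. Then I would add $F(\xb^k) - F_{\gamma_k}(\xb^k) \leq \gamma_k D_{\Uc}$ from \eqref{eq:key_est1}, divide through by $\frac{\beta_k^{-1}}{\tau_{k-1}^2}$, and simplify using the closed-form consequences of the recursion — in particular that the recursion forces $\gamma_k$ to decay like $\frac{\norm{\Ab}^2}{L_g k}$ (one checks $\gamma_k^{-1} = L_g\norm{\Ab}^{-2} k + O(\log k)$ by unrolling) and $\frac{\beta_k^{-1}}{\tau_{k-1}^2} = \Theta(L_g k)$. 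Feeding these asymptotics in produces the three advertised terms: $\frac{3L_g}{2k}\norm{\xb^0-\xopt}^2$ from the distance term, $\frac{\norm{\Ab}^2}{L_g k}(\frac{2L_b}{L_g}+1)D_{\Uc}$ from the ``main'' part of $\tilde S_k$ plus the $\gamma_k D_{\Uc}$ smoothing gap, and $\frac{(L_b-1)\norm{\Ab}^2}{L_g^2 k}(\ln k + 1)D_{\Uc}$ from the harmonic-sum part, exactly as in the bound \eqref{eq:Sk_bound} of Lemma~\ref{le:key_estimate2}. Setting $L_b = 1$ kills the logarithmic term and merges constants into $\frac{\norm{\Ab}^2}{L_g^2 k}(L_g+2)D_{\Uc}$, and the $\mathcal O(1/\varepsilon)$ complexity is then immediate.

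The main obstacle I anticipate is \emph{not} the telescoping — that is routine once the identity $\frac{(1-\tau_k)}{\tau_k^2}\beta_{k\!+\!1}^{-1} = \frac{1}{\tau_{k-1}^2}\beta_k^{-1}$ is in hand — but rather extracting sharp enough asymptotics for $\gamma_k$ and for the weighted sum $\tilde S_k$ to land the \emph{exact} constants ($\frac32$, $\frac{2L_b}{L_g}+1$, etc.) claimed in \eqref{eq:convergence5}. The recursion for $\gamma_k$ is nonlinear-looking but becomes linear in $\gamma_k^{-1}$, so one should unroll $\gamma_k^{-1}$ explicitly as a sum; the delicate part is that the ``unfavourable sign'' portion of each $-R_i$ contributes a term whose magnitude depends on $\gamma_i - \gamma_{i\!+\!1}$, and one must bound $\sum_i (\gamma_i - \gamma_{i\!+\!1})\frac{\beta_{i\!+\!1}^{-1}}{\tau_i^2}$ — a telescoping-plus-correction estimate that needs care to keep the constant at $\frac{2L_b}{L_g}$ rather than something looser. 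I would also need to double-check the step-size choice $\beta_{k\!+\!1} = (L_g + \gamma_{k\!+\!1}^{-1}\norm{\Ab}^2)^{-1}$ is consistent with the value of $\gamma_{k\!+\!1}$ being used inside $\nabla f_{\gamma_{k\!+\!1}}$, i.e.\ that the descent inequality is applied with matching parameters, since $\gamma$ changes between iterations; the inequality \eqref{eq:key_est21a} comparing $\hat\ell^k_\gamma$ to $F_\gamma$ must be invoked at $\gamma = \gamma_{k\!+\!1}$ throughout, as in the proof of Lemma~\ref{le:key_estimate1}.
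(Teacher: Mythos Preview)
Your overall strategy matches the paper's proof exactly: derive the analogue of Lemma~\ref{le:key_estimate1} with the new step-size $\beta_{k+1} = (L_g+\gamma_{k+1}^{-1}\norm{\Ab}^2)^{-1}$, choose $\tau_k=\frac{1}{k+1}$ and $\gamma_{k+1}$ so that the telescoping identity holds, sum, and then control $\gamma_k$ and the accumulated $R_k$-terms. The structural plan is sound.

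There is, however, a systematic bookkeeping error in your algebra that you would discover immediately on executing the plan. The descent estimate you wrote is missing the factor $\tau_k^2$: the correct coefficient on the bracket (compare \eqref{eq:key_estimate31}) is $\frac{\tau_k^2}{2\beta_{k+1}}$, not $\frac{1}{2\beta_{k+1}}$. Correspondingly, the right multiplier is $\frac{\beta_{k+1}}{\tau_k^2}$ (not $\frac{\beta_{k+1}^{-1}}{\tau_k^2}$), and the telescoping identity is
\[
\frac{(1-\tau_k)\beta_{k+1}}{\tau_k^2}=\frac{\beta_k}{\tau_{k-1}^2},
\]
i.e.\ with $\beta$, not $\beta^{-1}$. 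Your stated identity $\frac{(1-\tau_k)}{\tau_k^2}\beta_{k+1}^{-1}=\frac{1}{\tau_{k-1}^2}\beta_k^{-1}$ is \emph{not} satisfied by the given $\gamma$-recursion (substituting gives the wrong sign on $L_g$), whereas the version with $\beta$ reduces exactly to $\gamma_{k+1}=\frac{k\gamma_k\norm{\Ab}^2}{L_g\gamma_k+\norm{\Ab}^2(k+1)}$. Likewise, your asymptotic $\frac{\beta_k^{-1}}{\tau_{k-1}^2}=\Theta(L_g k)$ is off by $k^2$; the relevant quantity is $\frac{\tau_{k-1}^2}{\beta_k}\le \frac{3L_g}{k}$, which produces the advertised $\frac{3L_g}{2k}\norm{\xb^0-\xopt}^2$.

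One further note on the $\gamma_k$ asymptotics: the paper does not unroll $\gamma_k^{-1}$ as a sum, but instead proves by induction the sandwich
\[
\frac{\gamma_1\norm{\Ab}^2}{(L_g\gamma_1+2\norm{\Ab}^2)k}\le \gamma_{k+1}\le \frac{\gamma_1}{k+1},
\]
using $\gamma_1=\norm{\Ab}^2/L_g$ so that the lower bound becomes $\gamma_{k+1}\ge \frac{\norm{\Ab}^2}{3L_g k}$. This two-sided estimate is what delivers the exact constants $\frac{3}{2}$ and $\frac{2L_b}{L_g}+1$ without having to track an $O(\log k)$ correction in $\gamma_k^{-1}$; your unrolling approach would also work but is slightly messier for landing those constants.
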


\vspace{-3.5ex}
\section{Application to general constrained convex optimization}\label{sec:app}
\vspace{-2ex}
In this section, we customize Algorithm~\ref{alg:A1}  to solve the following general constrained convex optimization problem:
\begin{equation}\label{eq:constr_cvx}
\varphi^{\star} := \min_{\ub\in\R^n }\Big\{ \varphi(\ub) : \Ab\ub - \bb \in -\Kc, ~\ub\in\Uc \Big\},
\end{equation}
where $\varphi$ is a proper, closed and convex function from $\R^n\to\Rext$, $\Ab\in\R^{p\times n}$, $\bb\in\R^p$, $\Uc$ and $\Kc$ are two nonempty, closed and convex set in $\R^n$ and $\R^p$, respectively.
Without loss of generality, we can assume that $\varphi$ and $\Uc$ are decomposable as in \eqref{eq:decom_structure} with $m\geq 1$.

Associated with the primal setting \eqref{eq:constr_cvx}, we consider its dual problem
\vspace{-0.5ex}
\begin{equation}\label{eq:constr_cvx_dual}
F^{\star} := \min_{\xb\in\R^p}\Big\{ F(\xb) := \max_{\ub\in\Uc}\set{ \iprods{\xb, \Ab\ub} - \varphi(\ub)}  - \iprods{\bb,\xb} + \max_{\rb\in\Kc}\iprods{\xb,\rb} \Big\}.
\vspace{-0.5ex}
\end{equation}
Clearly,  \eqref{eq:constr_cvx_dual} has the same form as \eqref{eq:com_min} with $f(\xb) := \displaystyle\max_{\ub}\set{ \iprods{\xb,\Ab\ub} - \varphi(\ub): \ub\in\Uc }$ and $g(\xb) := s_{\Kc}(\xb) - \iprods{\bb,\xb}$, where $s_{\Kc}$ is the support function of $\Kc$.

We now specify Algorithm~\ref{alg:A1} to solve this dual problem.
Computing $\ub_{\gamma}^{*}(\xb)$ requires to solve the following sub-problem:
\begin{equation*}
\ub_{\gamma}^{*}(\xb) := \argmin_{\ub}\set{\iprods{\xb, \Ab\ub} - \varphi(\ub) - \gamma b_{\Uc}(\ub) }.
\end{equation*} 
The proximal-step of $g$ becomes $\prox_g(\xb) := \prox_{s_{\Kc}}(\xb+\bb) = (\xb+\bb) - \mathrm{proj}_{\Kc}(\xb+\bb)$, where $\mathrm{proj}_{\Kc}(\cdot)$ is the projection onto $\Kc$.
Together with the dual steps, we use an adaptive weighted averaging scheme
\begin{equation}\label{eq:averaging_scheme}
\bar{\ub}^k := \Gamma_k^{-1}\sum_{i=0}^k\tau_i^{-1}\gamma_{i+1}\uast_{\gamma_{i+1}}(\xhat^i),~~~\text{and}~~~\Gamma_k :=  \sum_{i=0}^k\tau_i^{-1}\gamma_{i+1},
\end{equation}
 to construct an approximate primal solution $\bar{\ub}^k$ to an optimal solution $\ub^{\star}$ of \eqref{eq:constr_cvx}.
Clearly, we can compute $\bar{\ub}^k$ recursively starting from $\bar{\ub}^0 := \boldsymbol{0}^n$ as
\begin{equation}\label{eq:averaging_scheme2}
\bar{\ub}^k := (1-\nu_k)\bar{\ub}^{k-1} + \nu_k\uast_{\gamma_{k\!+\!1}}(\xhat^k), ~~\text{where}~~\nu_k := (\Gamma_k\tau_k)^{-1}\gamma_{k\!+\!1} \in (0, 1].
\end{equation}
We incorporate this scheme into Algorithm~\ref{alg:A1} to solve \eqref{eq:constr_cvx}. 
While Algorithm \ref{alg:A1} constructs an approximate solution to the dual problem \eqref{eq:constr_cvx_dual}, \eqref{eq:averaging_scheme2} allows us to recover an approximate  solution $\bar{\ub}^k$ of the primal problem \eqref{eq:constr_cvx}.
We name this algorithmic variant as Algorithm \ref{alg:A1}(c).

We  specify the convergence guarantee of Algorithm \ref{alg:A1}(c) in the following theorem.
The proof of this theorem is given in Appendix~\ref{apdx:th:primal_recovery}.

\vspace{-1ex}
\begin{theorem}\label{th:primal_recovery}
Assume that $b_{\Uc}$ is chosen such that $L_b = 1$, and $\bar{c} = 1$ in \eqref{eq:param_update_rules}. Let $\{(\xb^k,\bar{\ub}^k)\}$ be generated by Algorithm $\mathrm{\ref{alg:A1}(c)}$.
Then $\{\bar{\ub}^k\}\subset\Uc$ and
\begin{align}\label{eq:primal_recovery2}
{\!\!\!\!\!\!}\left\{\begin{array}{l}
-\Vert\xopt\Vert\dist{\bb - \Ab\bar{\ub}^k, \Kc} \leq \varphi(\ubar^k) - \varphi^{\star} \leq \frac{ \norm{\Ab}^2\norm{\xb^0}^2 +2 (\gamma_1 + 2\gamma_1^2)D_{\Uc} }{\gamma_1(k+1)},\vspace{1.5ex}\\
\dist{\bb \!-\! \Ab\bar{\ub}^k, \Kc} \leq \frac{\norm{\Ab}^2\Big(\norm{\xb^0 \!-\! \xopt} + \sqrt{\norm{\xb^0 -  \xopt}^2 +   2\norm{\Ab}^{-2}(2\gamma_1^2 + \gamma_1)D_{\Uc}}\Big)}{\gamma_1(k\!+\!1)}.
\end{array}\right.{\!\!\!\!}
\end{align}
Consequently, the worst-case iteration-complexity of  Algorithm $\mathrm{\ref{alg:A1}(c)}$ to achieve an $\varepsilon$-solution $\bar{\ub}^k$ such that $\vert\varphi(\bar{\ub}^k) - \varphi^{\star}\vert \leq \varepsilon$ and $\dist{\bb-\Ab\bar{\ub}^k, \Kc} \leq \varepsilon$ is $\mathcal{O}\left(\frac{1}{\varepsilon}\right)$.
\end{theorem}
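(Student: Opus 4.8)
The plan is to exploit the fact that Algorithm \ref{alg:A1}(c) is exactly Algorithm \ref{alg:A1} applied to the dual problem \eqref{eq:constr_cvx_dual}, so Theorem \ref{th:convergence_v2} already controls $F(\xb^k) - \Fopt$ at the $\mathcal{O}(1/k)$ rate. By weak duality $\Fopt = -\varphi^\star$ (under a suitable constraint qualification), so $F(\xb^k) + \varphi^\star \le \mathcal{O}(1/k)$. First I would unpack $F(\xb^k)$ using its definition: $F(\xb^k) = \max_{\ub\in\Uc}\{\iprods{\xb^k,\Ab\ub}-\varphi(\ub)\} + s_{\Kc}(\xb^k) - \iprods{\bb,\xb^k}$. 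The key object is the averaged primal iterate $\bar\ub^k$ from \eqref{eq:averaging_scheme}. Because each $\uast_{\gamma_{i+1}}(\xhat^i)\in\Uc$ and $\bar\ub^k$ is a convex combination of these points (the weights $\nu_k$ lie in $(0,1]$ by \eqref{eq:averaging_scheme2}), convexity of $\Uc$ gives $\bar\ub^k\in\Uc$ immediately, establishing the first claim $\{\bar\ub^k\}\subset\Uc$.

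The heart of the argument is a Jensen/telescoping estimate: I would show that the weighted average of the smoothed surrogate values along the trajectory dominates a linearization of $F$ evaluated at $\bar\ub^k$. Concretely, revisiting the proof of Lemma~\ref{le:key_estimate1} and Lemma~\ref{le:key_estimate2} but keeping the term $\tau_k F(\xb)$ with $\xb$ a free dual variable, and summing the recursion \eqref{eq:key_estimate31} against the weights $\Gamma_k^{-1}\tau_i^{-1}\gamma_{i+1}$, one obtains a bound of the form
\begin{equation*}
\iprods{\xb, \Ab\bar\ub^k - \bb} + \Gamma_k^{-1}\textstyle\sum_i \tau_i^{-1}\gamma_{i+1}(-\varphi(\uast_{\gamma_{i+1}}(\xhat^i))) + s_{\Kc}(\xb) \;\ge\; -\varphi^\star - \frac{C}{k+1}
\end{equation*}
for all $\xb$, with $C := \norm{\Ab}^2\norm{\xb^0-\xopt}^2/\gamma_1 \cdot \tfrac12 + \dots$ matching the constants in \eqref{eq:convergence_rate2_v2}. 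Applying Jensen to the convex function $\varphi$ converts the averaged $-\varphi$ term into $-\varphi(\bar\ub^k)$, and since $\inf_{\xb}\{\iprods{\xb,\rb}+s_{\Kc}(\xb)\} = -\delta_{-\Kc}(\rb) \cdot(\text{or }0)$, taking the infimum over $\xb$ on the left produces the relaxed feasibility/optimality inequality $\varphi(\bar\ub^k) - \varphi^\star \le C/(k+1)$ whenever $\Ab\bar\ub^k - \bb\in-\Kc$, and more generally yields the two-sided estimate once a penalty term $\norm{\xb}\dist{\bb-\Ab\bar\ub^k,\Kc}$ is carried along. The lower bound $\varphi(\bar\ub^k)-\varphi^\star \ge -\norm{\xopt}\dist{\bb-\Ab\bar\ub^k,\Kc}$ comes from a standard Lagrangian/saddle-point inequality: $\varphi(\bar\ub^k)+\iprods{\xopt,\Ab\bar\ub^k-\bb_{\Kc}}\ge\varphi^\star$ for the projection $\bb_{\Kc}$ of $\bb-\Ab\bar\ub^k$ onto its feasible shift, then Cauchy–Schwarz on the residual.

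For the feasibility bound (the second line of \eqref{eq:primal_recovery2}) I would specialize $\xb$ in the summed inequality to a scaled version of the residual direction, i.e. choose $\xb = \xopt + t\,(\text{unit residual vector})$ or use the known trick of plugging $\xb = \xopt + \dist{\bb-\Ab\bar\ub^k,\Kc}\cdot v$ with $v$ a subgradient-type direction of $s_{\Kc}$, combine with the upper bound on $\varphi(\bar\ub^k)-\varphi^\star$ and the lower bound above; this produces a quadratic inequality in $d_k := \dist{\bb-\Ab\bar\ub^k,\Kc}$ of the shape $\tfrac{\norm{\Ab}^{-2}}{2}d_k^2 - \norm{\xb^0-\xopt}d_k - \frac{(2\gamma_1^2+\gamma_1)D_{\Uc}}{\gamma_1(k+1)} \cdot \norm{\Ab}^{-2}\cdot(\text{const}) \le 0$, whose larger root gives exactly the stated expression with the $\sqrt{\cdot}$ term. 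The worst-case $\mathcal{O}(1/\varepsilon)$ iteration complexity then follows by setting each right-hand side $\le \varepsilon$.

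The main obstacle I anticipate is bookkeeping the extra residual terms $R_k$ from Lemma~\ref{le:key_estimate1} through the weighted sum: unlike in Theorem~\ref{th:convergence_v2} where $F(\xb)$ is evaluated at the fixed optimum $\xopt$, here $\xb$ must remain a free variable so that we can take the infimum (over $\xb$) and extract both primal objective accuracy and feasibility simultaneously. Getting the weights $\tau_i^{-1}\gamma_{i+1}$ to telescope cleanly against the $\norm{\Ab}^2\tau_k^2/(2\gamma_{k+1})[\norm{\tilde\xb^k-\xb}^2 - \norm{\tilde\xb^{k+1}-\xb}^2]$ terms — and verifying that with $L_b = 1$, $\bar c = 1$ the leftover $R_k$ contributions are nonnegative (via \eqref{eq:Rk_lower_bound2}, which becomes $\tfrac{\gamma_{k+1}}{\tau_k^2}R_k \ge 0$ when $L_b=1$) — is the delicate part; everything after that is the elementary quadratic-inequality manipulation sketched above.
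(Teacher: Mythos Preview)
Your overall strategy is the paper's: keep $\xb$ free in the telescoped recursion from Lemma~\ref{le:key_estimate1}, apply Jensen to $\varphi$ to pass from the weighted $\uast_{\gamma_{i+1}}(\xhat^i)$ to $\bar\ub^k$, then specialize $\xb$ (to $0$ for the objective upper bound, to a shifted residual direction for the feasibility bound) and solve the resulting quadratic in $\dist{\bb-\Ab\bar\ub^k,\Kc}$. The lower bound on $\varphi(\bar\ub^k)-\varphi^\star$ via the saddle-point inequality is also exactly what the paper does.

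One concrete error: your claim that $\frac{\gamma_{k+1}}{\tau_k^2}R_k \ge 0$ when $L_b = 1$ is false. Substituting $L_b = 1$ and $\bar c = 1$ into \eqref{eq:Rk_lower_bound2} gives
\[
\frac{\gamma_{k+1}}{\tau_k^2}R_k \;\ge\; -\frac{\gamma_1^2}{(k+1)^2}\,D_\Uc,
\]
which is strictly negative. You therefore cannot drop the $R_k$ contributions; you must carry the terms $s_k D_\Uc$ through the sum, giving $S_k D_\Uc \le 2\gamma_1^2 D_\Uc$ by \eqref{eq:Sk_bound}. This is precisely the origin of the $2\gamma_1^2$ in \eqref{eq:primal_recovery2}. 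A second correction: after telescoping you are left with $\frac{\gamma_{k+1}}{\tau_k^2}\Delta F_{k+1}$ on the left, but $\Delta F_{k+1} := F_{\gamma_{k+1}}(\xb^{k+1}) - \Fopt$ need not be nonnegative (only $\Delta F_{k+1}\ge -\gamma_{k+1}D_\Uc$ via \eqref{eq:key_est1}), so discarding it costs an extra $\frac{\gamma_{k+1}^2}{\tau_k^2}D_\Uc$ term; this accounts for the remaining $\gamma_1$ in \eqref{eq:primal_recovery2}. Both fixes are mechanical and leave the $\mathcal O(1/k)$ rate intact.

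A minor deviation: for the feasibility bound the paper substitutes $\xb := \xb^0 - \norm{\Ab}^{-2}\Gamma_k(\Ab\bar\ub^k - \bb + \rb)$, centered at $\xb^0$ because the penalty term in the summed inequality is $\frac{\norm{\Ab}^2}{2\Gamma_k}\norm{\xb^0 - \xb}^2$, not $\norm{\xopt - \xb}^2$. Your proposed centering at $\xopt$ would introduce a spurious cross term $\iprods{\xb^0-\xopt,\cdot}$; the paper's choice yields the clean quadratic in $d_k$ directly, after first combining the free-$\xb$ inequality with the saddle-point inequality \eqref{eq:proof_lm41_4} to eliminate $\varphi(\bar\ub^k)$.
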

Theorem \ref{th:primal_recovery} shows that  Algorithm \ref{alg:A1}(c) has the $\mathcal{O}\left(\frac{1}{\varepsilon}\right)$ worst-case iteration-complexity on the primal objective residual and feasibility violation for \eqref{eq:constr_cvx}.

\vspace{-3.5ex}
\section{Preliminarily numerical experiments}\label{sec:num_exp}
\vspace{-2ex}
We demonstrate the performance of   Algorithm~\ref{alg:A1}  for solving the three well-known convex optimization problems.
The first example is a LASSO problem with $\ell_1$-loss \cite{Yang2011}, the second one is a square-root LASSO studied in  \cite{Belloni2011}, 
and the last example is an image deblurring problem with a non-smooth data fidelity function (e.g., the $\ell_1$-norm or the $\ell_2$-norm function).

\vspace{-3.5ex}
\subsection{The $\ell_1\text{-}\ell_1$-regularized LASSO}\label{subsec:robustLasso}
\vspace{-2.0ex}
We consider the $\ell_1\text{-}\ell_1$-regularized LASSO problem studied in \cite{Yang2011} as follows:
\begin{equation}\label{eq:l1_lasso}
\Fopt := \min\set{ F(\xb) := \norm{\Bb\xb - \bb}_1 + \lambda \norm{\xb}_1 : \xb\in\R^p},
\end{equation}
where  $\Bb$ and $\bb$ are defined as in \eqref{eq:sqrt_lasso0}, and $\lambda > 0$ is a regularization parameter.

The function $f(\xb) := \norm{\Bb\xb - \bb}_1 =  \max\set{ \iprods{\Bb^{\top}\ub, \xb} - \iprods{\bb,\ub} : \norm{\ub}_{\infty} \leq 1}$ falls into the decomposable case considered in Subsection \ref{subsec:decomposable}. Hence, we can smooth $f$ using the quadratic prox-function to obtain
\vspace{-0.75ex}
\begin{equation*}
f_{\gamma}(\xb) := \max_{\ub}\Big\{ \iprods{\xb, \Bb^{\top}\ub} - \iprods{\bb,\ub} - (\gamma/2)\norm{\ub}^2 : \ub\in\Bc_{\infty} \Big\}.
\vspace{-0.75ex}
\end{equation*}
Clearly, we can  show that $\uast_{\gamma}(\Bb\xb) :=  \mathrm{proj}_{\Bc_{\infty}}\left(\gamma^{-1}(\Bb\xb - \bb)\right)$.
In this case, we also have $D_{\Bc_{\infty}} := \frac{1}{2}n$ and $\Uc := \Bc_{\infty}$. 

Now, we apply Algorithm~\ref{alg:A1} to solve problem \eqref{eq:sqrt_lasso0}. 
To verify the theoretical bound in Theorem \ref{th:convergence_v2}, we use CVX \cite{Grant2006} to solve \eqref{eq:sqrt_lasso0}  and obtain a high accuracy approximate solution $\xopt$. Then, we can compute $R_0 := \norm{\xb^0 - \xopt}_2$, and choose $\gamma_1 \equiv \gamma_1^{*} := \frac{\norm{\Bb}R_0}{\sqrt{6D_{\Bc_{\infty}}}}$.
From Theorem \ref{th:convergence_v2}, we have $F(\xb^k) \!-\! \Fopt \leq \frac{R_0\norm{\Bb}\sqrt{6D_{\Bc_{\infty}}}}{k}$, which is the worst-case bound of Algorithm \ref{alg:A1}, where $k$ is the iteration number.

For our comparison, we also implement the smoothing algorithm in \cite{Nesterov2005c} using the quadratic prox-function.
As indicated in \eqref{eq:nes_bound}, we set $\gamma \!\equiv\! \gamma^{*} := \frac{\sqrt{2}\norm{B}R_0}{\sqrt{D_{\Uc}}(k+1)}$.
Hence, we also obtain the theoretical upper bound $F(\xb^k) \!-\! \Fopt \!\leq\! \frac{2\sqrt{2}\norm{B}R_0\sqrt{D_{\Uc}}}{(k+1)}\!$.
We name this algorithm as \textrm{Non-adapt. Alg.} (non-adaptive algorithm).

\begin{figure*}[ht!]
\begin{center}
\vspace{-4ex}
\includegraphics[width =1\textwidth]{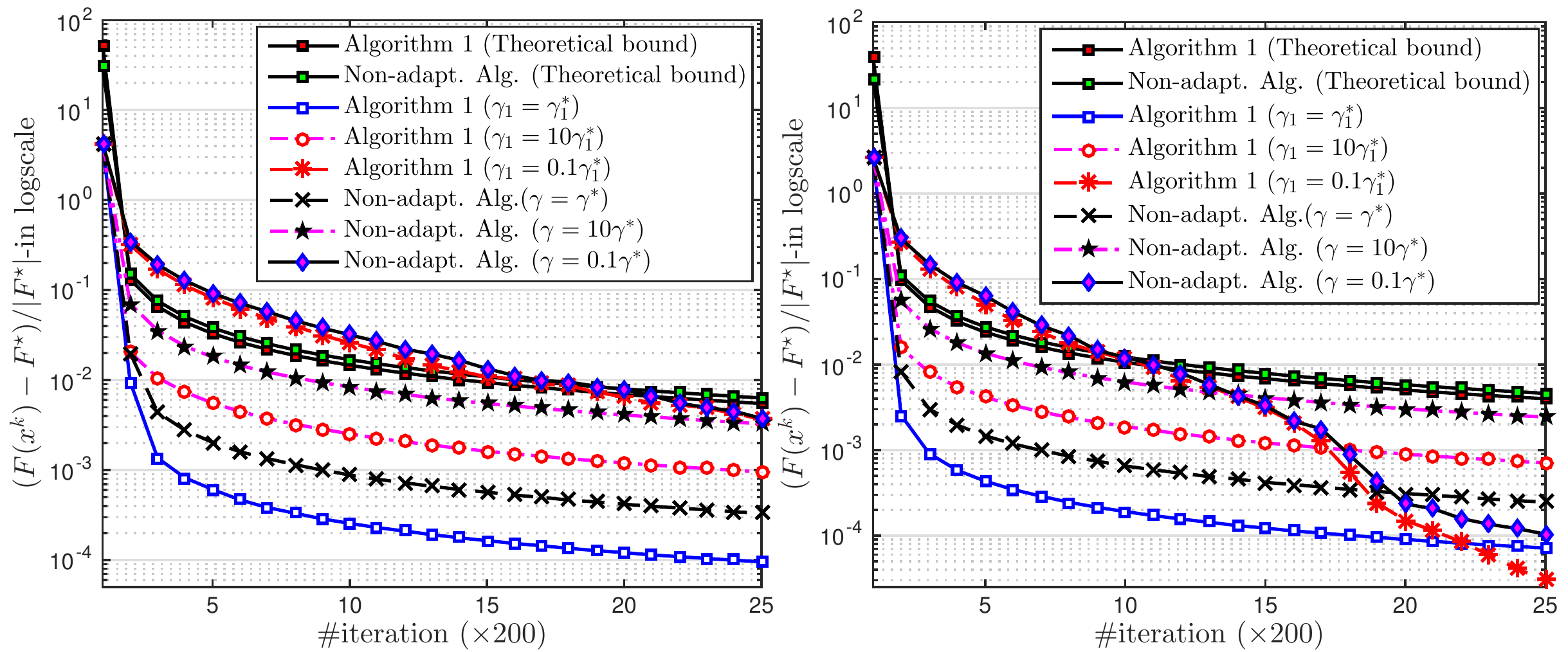}
\vspace{-3ex}
\caption{The empirical performance vs. the theoretical bounds of the $6$ algorithmic variants (Left: non-correlated data, Right: correlated data).}\label{fig:robust_exam1000_evst}
\vspace{-6ex}
\end{center}
\end{figure*} 

The test data is generated as follows:
Matrix $\Bb \in \R^{n\times p}$ is generated randomly using the standard Gaussian distribution $\mathcal{N}(0, 1)$. We consider two cases. In the first case, we use non-correlated data, while in the second case, we generate $\Bb$ with $50\%$ correlated columns as $\Bb(:,j+1) = 0.5\Bb(:,j) + \textrm{randn}(:)$. 
The observed measurement vector $\bb$ is generated as $\bb := \Bb\xb^{\natural} + \mathcal{N}(0, 0.05)$, where $\xb^{\natural}$ is a given $s$-sparse vector generated randomly using $\mathcal{N}(0, 1)$.

We test both algorithms: Algorithm~\ref{alg:A1} and \textrm{Non-adapt.~Alg.} on  two problem instances of the size $(p, n, s) = (1000, 350, 100)$ (with and without correlated data, respectively). 
We sweep along the values of $\lambda$ to find an optimal value for $\lambda$ which are $\lambda = 6.2105$ for non-correlated data, and $\lambda = 5.7368$ for correlated data, respectively. 
For comparison, we first select the optimal value for $\gamma_1 := \gamma_1^{*}$ and $\gamma := \gamma^{*}$ in both algorithms. 
Then, we consider two cases: $(i)$ $\gamma_1 := 10\gamma_1^{*}$ and $\gamma := 10\gamma^{*}$, and $(ii)$~$\gamma_1 := 0.1\gamma_1^{*}$ and $\gamma := 0.1\gamma^{*}$.

Figure \ref{fig:robust_exam1000_evst} plots the empirical bounds of the $6$ variants vs. the theoretical bounds from $200$ to $10,000$ iterations.
Obviously, both algorithms show their empirical rate which is much better than their theoretical bound. But if we change the smoothness parameters, the guarantee is no longer preserved.
Algorithm~\ref{alg:A1} shows a better performance than \textrm{Non-adapt.~Alg.}  in both cases. 

\vspace{-3.5ex}
\subsection{Square-root LASSO}\label{subsec:sqrtLasso}
\vspace{-2ex}
We consider the following well-known square-root LASSO problem:
\vspace{-0.5ex}
\begin{equation}\label{eq:sqrt_lasso0}
\min_{\xb\in\R^p}\Big\{ F(\xb) := \norm{\Bb\xb - \bb}_2 + \lambda\norm{\xb}_1 \Big\}.
\vspace{-0.5ex}
\end{equation}
As proved in \cite{Belloni2011}, if matrix $\Bb$ is Gaussian, then we can select the regularization parameter $\lambda$ such that we can obtain exact recovery to the true solution $\xb^{\natural}$.

The function $f$ defined by $f(\xb) := \norm{\Bb\xb - \bb}_2$ can be written as
\vspace{-0.5ex}
\begin{equation*}
f(\xb) =  \max_{\ub}\set{ \iprods{\Bb^{\top}\ub, \xb} - \iprods{\bb,\ub} : \norm{\ub}_2 \leq 1}.
\vspace{-0.5ex}
\end{equation*}
Let $\Bc_2 := \set{\ub\in\R^n : \norm{\ub}_2 \leq 1}$ be the $\ell_2$-norm ball. We choose $b(\ub) := \frac{1}{2}\norm{\ub}_2^2$ as a prox-function for $\Bc_2$. 
Then, we can smooth $f$ using $b(\cdot) := \frac{1}{2}\norm{\cdot}_2^2$  as
\begin{equation*}
\vspace{-0.5ex}
f_{\gamma}(\xb) := \max_{\ub}\set{ \iprods{\xb, \Bb^{\top}\ub} - \iprods{\bb,\ub} - (\gamma/2)\norm{\ub}_2^2 : \ub\in\Bc_2}.
\vspace{-0.5ex}
\end{equation*}
Clearly, $\uast_{\gamma}(\xb) :=  \mathrm{proj}_{\Bc_2}\left(\gamma^{-1}(\Bb\xb - \bb)\right)$ is the solution of the maximization problem,  where $\mathrm{proj}_{\Bc_2}$ is the projection onto $\Bc_2$.
Moreover, we have $D_{\Uc} = \frac{1}{2}$. 

Now, we apply Algorithm~\ref{alg:A1} to solve problem \eqref{eq:sqrt_lasso0}. 
We choose $\bar{c} := 1$ and set $\gamma_1 \equiv \gamma_1^{*} := \frac{\norm{\Ab}R_0}{\sqrt{6D_{\Uc}}}$, where $R_0 := \norm{\xb^0 - \xopt}_2$. 
We also estimate the theoretical upper bound indicated in Theorem \ref{th:convergence_v2} for $F(\xb^k) - \Fopt$ using \eqref{eq:convergence_rate_v2a}, which is $\frac{\Vert\Ab\Vert R_0\sqrt{6D_{\Uc}}}{k}$.
We implement the smoothing algorithm in \cite{Nesterov2005c} for our comparison by using the same prox-function.
The parameter of this algorithm is set as in the previous example. 

The data test is generated as in Subsection \ref{subsec:robustLasso}.
We also perform the test on two problem instances of size $(p, n, s) = (1000, 350, 100)$: non-correlated data and correlated data.
We choose the regularization parameter $\lambda$ as  suggested in \cite{Belloni2011}.
We use the same setting for the smoothness parameter $\gamma$ in both algorithms as in Subsection \ref{subsec:robustLasso}.
In this case, the theoretical upper bound of Algorithm~\ref{alg:A1} depends on the log-term which is scaled by the condition number of $\Bb\Bb^{\top}$, and is worse than in \textrm{Non-adapt.~Alg.} variant.

\begin{figure*}[ht!]
\begin{center}
\vspace{-4ex}
\includegraphics[width = 1.01\textwidth]{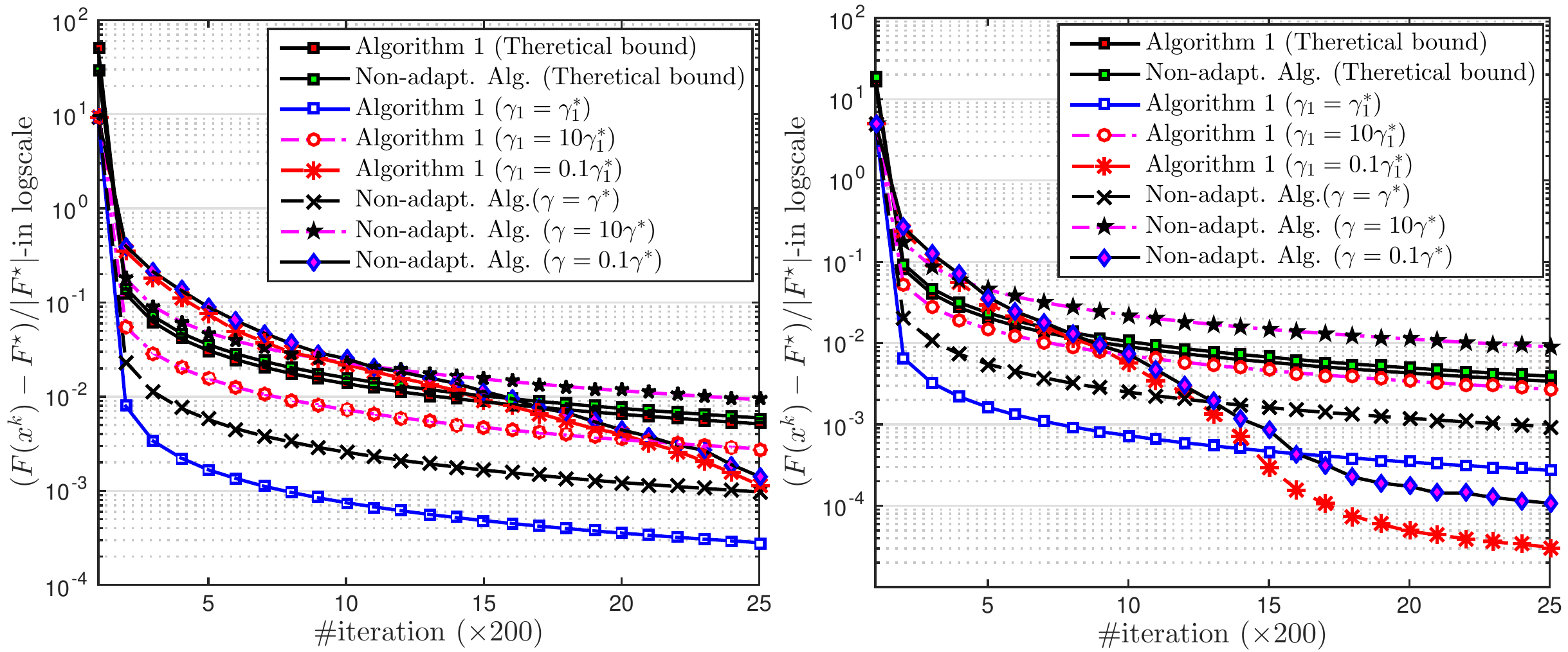}
\vspace{-3ex}
\caption{\footnotesize The empirical performance vs. the theoretical bounds of the $6$ algorithmic variants (Left: non-correlated data, Right: $50\%$-correlated collumns).}\label{fig:sqrt_exam1000_evst}
\end{center}
\vspace{-6ex}
\end{figure*}

Figure \ref{fig:sqrt_exam1000_evst} plots the empirical bounds of the $6$ variants vs. the theoretical bounds from $200$ to $10,000$ iterations.
Obviously, both algorithms show their empirical rate which is much better than their theoretical bound. 
Algorithm~\ref{alg:A1} gives a better performance than the nonadaptive method in this example. 
We  note that the theoretical bound in Algorithm~\ref{alg:A1} remains non-optimal, while it is optimal in the nonadaptive one.

\begin{figure*}[ht!]
\begin{center}
\vspace{-5ex}
\includegraphics[width = 1.01\textwidth]{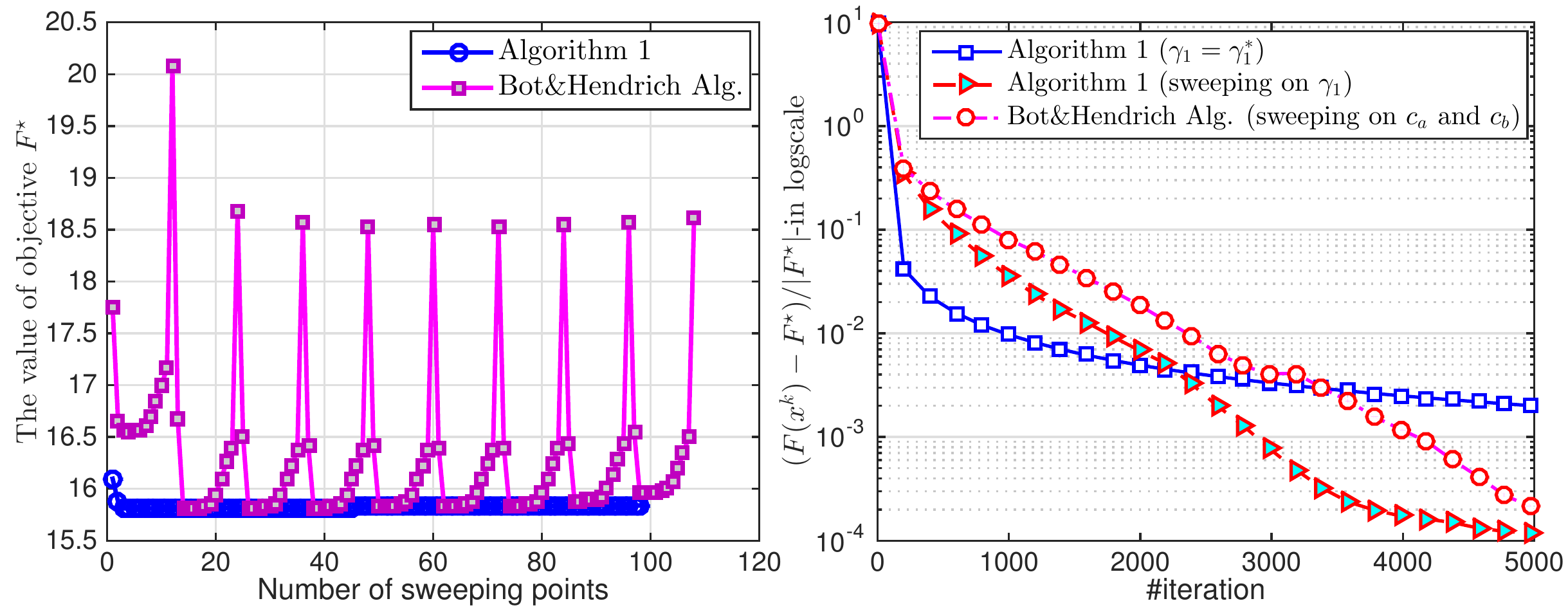}
\vspace{-3ex}
\caption{\footnotesize Comparison of Algorithm~\ref{alg:A1} and  \texttt{Bo\c{t}\&Hendrich~Alg.} (Left: the objective values vs. the number of sweeping points, ~~Right: Convergence of the relative objective residual).}\label{fig:compare_sqrtLasso}
\end{center}
\vspace{-6ex}
\end{figure*}

Finally, we compare Algorithm~\ref{alg:A1} with the variable smoothing algorithm in \cite{boct2012variable} (\texttt{Bo\c{t}\&Hendrich~Alg.}). 
Whlile the first term $f(\xb) := \Vert\Ab\xb - \bb\Vert_2$ is smoothened as in Algorithm \ref{alg:A1}, we smooth the second term $g(\xb) := \lambda\Vert\xb\Vert_1$ as 
\vspace{-0.75ex}
\begin{equation*}
g_{\beta}(\xb) := \max_{\vb}\set{\iprods{\xb,\vb} - (\beta/2)\Vert \vb\Vert_2^2 : \Vert\vb\Vert_{\infty} \leq 1}.
\vspace{-0.75ex}
\end{equation*} 
Then, we  update $\gamma_k$ and $\beta_k$ as $\gamma_k = \frac{1}{c_a(k+1)}$ and $ \beta_k = \frac{1}{c_b(k+1)}$, respectively as suggested in \cite{boct2012variable}, where $c_a$ and $c_b$ are two appropriate constants.

We compare Algorithm~\ref{alg:A1} and \texttt{Bo\c{t}\&Hendrich~Alg.} on a problem instance of  size $(p, n, s) = (1000, 350, 100)$, where the data is generated as in the previous tests. 
To find an appropriate value of $c_a$ and $c_b$, we sweep $c_a\in [10, 5000]$ simultaneously with $c_b \in [0.001, 500]$.
We obtain $c_a = 51$ and $c_b = 49$. 
For Algorithm~\ref{alg:A1}, we consider two cases. In the first case, we set $\gamma_1 = \gamma_1^{*} = 129.5505$ computed from the worst-case bound, while in the second case, we also sweep $\gamma_1\in [10, 1000]$ to find an appropriate value $\gamma_1 = 51$.
The results of both algorithms are plotted in Figure \ref{fig:compare_sqrtLasso} for $5000$ iterations.

Figure~\ref{fig:compare_sqrtLasso} (left) shows that the objective value produced by Algorithm \ref{alg:A1} does not vary much when $\gamma_1\in [10, 1000]$, while, in \texttt{Bo\c{t}\&Hendrich~Alg.}, the objective value changes rapidly when we sweep on $c_a$ and $c_b$ simultaneously. Hence, it is unclear how to choose an appropriate value for $c_a$ and $c_b$ without sweeping.
Figure \ref{fig:compare_sqrtLasso} (right) shows the convergence behavior of both algorithms. Without sweeping, Algorithm~\ref{alg:A1} has a good empirical convergence rate in the early iterations. With sweeping, both algorithms perform better in the later iterations. 
Algorithm~\ref{alg:A1} has a better performance than \texttt{Bo\c{t}\&Hendrich~Alg.}.

\vspace{-3.5ex}
\subsection{Image deblurring with the $\ell_1$ or $\ell_2$-data fidelity function}
\vspace{-2ex}
We consider an image deblurring problem using the $\ell_{\alpha}$-norm fidelity term as 
\vspace{-0.5ex}
\begin{equation}\label{eq:im_debulrring}
\min_{\Xb} \set{ F(\Xb) := \norm{\Ac(\Xb) - \bb}_{\alpha} + \lambda\norm{W\Xb}_1 : \Xb \in \R^{m\times q} },
\vspace{-0.5ex}
\end{equation}
where $\alpha \in \set{1, 2}$, $\Ab : \R^p\to\R^p$ ($p =m\times q$) is a blurring kernel, $\bb$ is an observed noisy image, $W : \R^p\to\R^p$ is the orthogonal Haar wavelet transform with four levels, $\lambda > 0$ is the regularizer parameter.

\begin{table}[ht!]
\begin{center}
\vspace{-1ex}
\caption{The PSNR values reported by the $8$ algorithmic variants on the $5$ test images}\label{tbl:PSNR_images}
\newcommand{\cell}[1]{#1}
\newcommand{\mblue}[1]{{\color{blue}#1}}
\newcommand{\celb}[1]{{\color{blue}#1}}
\newcommand{\celr}[1]{{\color{red}#1}}
\newcommand{\celn}[1]{{\color{Cerulean}#1}}
\rowcolors{2}{white}{black!05!white}
{\small 
\begin{tabular}{l|c|c|c|c|c}\toprule
{~~~~~~~~~~~~}Images{~~~}& \texttt{cameraman} & \texttt{barbara} & \texttt{lena} & \texttt{boat} & \texttt{house}  \\ \midrule
\multicolumn{6}{c}{PSNR of $4$ algorithms after $300$ iterations}\\ \midrule
Alg. \ref{alg:A1} ($\ell_1$, $\gamma_1=62$)       & \celn{26.2140} &\celn{26.8253} &\celn{27.1793} &\celn{26.4951} &\cell{30.9848} \\
Alg. \ref{alg:A1} ($\ell_1$, $\gamma_1$-sweeping) & \celb{26.2693} &\celb{27.0682} &\celb{27.5440} &\celb{26.5519} &\celb{31.6877} \\
Alg. \ref{alg:A1} ($\ell_2$, $\gamma_1=62$)       & \cell{26.2128} &\cell{26.8232} &\cell{27.1782} &\cell{26.4923} &\cell{30.4126} \\
Alg. \ref{alg:A1} ($\ell_2$, $\gamma_1$-sweeping) & \cell{26.2128} &\cell{26.8232} &\cell{27.1782} &\cell{26.4923} &\cell{30.4126} \\
Nes.  Alg. ($\ell_1$, $\gamma$-sweeping)               & \cell{25.0601} &\cell{26.1376} &\cell{26.3776} &\cell{25.2301} &\cell{30.2982} \\
Nes.  Alg. ($\ell_2$, $\gamma$-sweeping)               & \cell{25.0908} &\cell{26.1361} &\cell{26.3901} &\cell{25.2364} &\cell{30.4081} \\
BH  Alg. ($\ell_1$, $c_a$-sweeping)                                   & \cell{25.5784} &\cell{26.3421} &\cell{26.5916} &\cell{25.6025} &\celn{31.1606} \\
BH  Alg. ($\ell_2$, $c_a$-sweeping)                                   & \cell{25.4784} &\cell{26.4421} &\cell{26.5916} &\cell{25.6025} &\celn{31.1606} \\ \midrule
\multicolumn{6}{c}{PSNR of $4$ algorithms after $500$ iterations}\\ \midrule
Alg. \ref{alg:A1} ($\ell_1$, $\gamma_1=62$)       & \celn{27.0371} &\celn{27.6286} &\cell{28.1471} &\celn{27.3116} &\celn{32.1771} \\
Alg. \ref{alg:A1} ($\ell_1$, $\gamma_1$-sweeping) & \celb{27.1666} &\celb{27.8449} &\celb{28.2086} &\celb{27.4410} &\celb{32.8647} \\
Alg. \ref{alg:A1} ($\ell_2$, $\gamma_1=62$)       & \cell{27.0363} &\cell{27.6279} &\celn{28.1486} &\cell{27.3111} &\cell{32.1710} \\
Alg. \ref{alg:A1} ($\ell_2$, $\gamma_1$-sweeping) & \cell{27.0363} &\cell{27.6279} &\celn{28.1486} &\cell{27.3111} &\cell{32.1710} \\
Nes.  Alg. ($\ell_1$, $\gamma$-sweeping)               & \cell{25.0857} &\cell{26.1686} &\cell{26.4590} &\cell{26.1321} &\cell{30.4720} \\
Nes.  Alg. ($\ell_2$, $\gamma$-sweeping)               & \cell{25.0845} &\cell{26.1686} &\cell{26.4582} &\cell{25.2265} &\cell{30.4718} \\
BH  Alg. ($\ell_1$, $c_a$-sweeping)                         & \cell{26.5030} &\cell{27.1588} &\cell{27.1630} &\cell{27.0277} &\cell{31.8824} \\
BH  Alg. ($\ell_2$, $c_a$-sweeping)                         & \cell{26.5030} &\cell{27.1588} &\cell{27.1630} &\cell{27.0277} &\cell{31.8824} \\ \midrule
\multicolumn{6}{c}{PSNR of $4$ algorithms after $1000$ iterations}\\ \midrule
Alg. \ref{alg:A1} ($\ell_1$, $\gamma_1=62$)       & \celn{27.4774} &\cell{27.8353} &\cell{28.4224} &\celn{27.6596} &\cell{32.9985} \\
Alg. \ref{alg:A1} ($\ell_1$, $\gamma_1$-sweeping)  & \celb{27.3291} &\celb{27.8659} &\celb{28.4040} &\celb{27.9482} &\celb{33.2038} \\
Alg. \ref{alg:A1} ($\ell_2$, $\gamma_1=62$)        & \cell{27.2524} &\cell{27.8070} &\celn{28.4774} &\cell{27.5268} &\celn{33.1879} \\
Alg. \ref{alg:A1} ($\ell_2$, $\gamma_1$-sweeping)  & \cell{27.2524} &\cell{27.8070} &\celn{28.4774} &\cell{27.5268} &\celn{33.1879} \\
Nes.  Alg. ($\ell_1$, $\gamma$-sweeping)                & \cell{25.0870} &\cell{26.1691} &\cell{26.4602} &\cell{26.1371} &\cell{30.4698} \\
Nes.  Alg. ($\ell_2$, $\gamma$-sweeping)                & \cell{25.0867} &\cell{26.1690} &\cell{26.4600} &\cell{25.2267} &\cell{30.4700} \\
BH  Alg. ($\ell_1$, $c_a$-sweeping)                         & \cell{27.1128} &\celn{27.8391} &\cell{27.9327} &\cell{27.3487} &\cell{32.6715} \\
BH  Alg. ($\ell_2$, $c_a$-sweeping)                         & \cell{27.1723} &\cell{27.8205} &\cell{27.9327} &\cell{27.3143} &\cell{32.6715} \\ 
\bottomrule
\end{tabular}
}
\end{center}
\vspace{-7ex}
\end{table}

We now apply Algorithm~\ref{alg:A1} (\texttt{Alg.~\ref{alg:A1}}) to solve problem \eqref{eq:im_debulrring} and compare it with the nonadaptive variant (\texttt{Nes.~Alg.}) and Bo\c{t} \& Hendrich's algorithm (\texttt{BH~Alg.}) in \cite{boct2012variable}. 
Since $\Ab$ is orthogonal, we can use the quadratic smoothing function as $b_{\Uc}(\Xb) := (1/2)\norm{\Xb}_F^2$.
With this choice, we can compute the gradient of $\ub^{\ast}_{\gamma}(\Xb)$ defined by \eqref{eq:uast_beta} as $\ub^{\ast}_{\gamma}(\Xb) = \mathrm{proj}_{\mathcal{B}^{\ast}_{\alpha}}(\gamma^{-1}(\Ac(\Xb) - \bb))$, where $\mathrm{proj}_{\mathcal{B}_{\alpha}}$ is the projection onto the  dual norm ball $\mathcal{B}^{\ast}_{\alpha}$ of the $\ell_{\alpha}$-norm.

We test three algorithms on the five images: \texttt{cameraman}, \texttt{barbara}, \texttt{lena}, \texttt{boat} and \texttt{house} widely used in the literature.
The noisy images are generated as in \cite{Beck2009}.
Although we use the non-smooth $\ell_{\alpha}$-norm function with $\alpha=1$ or $\alpha = 2$, the regularization parameter $\lambda$ is set to $\lambda := 10^{-4}$ as suggested in \cite{Beck2009}, but it still provides the best recovery compared to other values in all $5$ images. 

While we fix $\gamma_1 = 62$ in Algorithm~\ref{alg:A1} which is roughly computed from the worst-case bound, we sweep $\gamma$ and $c_a$ (see Subsection \ref{subsec:sqrtLasso}) in $[0.0001, 1000]$ to choose the best possible value for \texttt{Nes. ~Alg.} and \texttt{BH~Alg.} in each image (with $300$ iterations). 
We also set $c_b = c_a$ as suggested in \cite{boct2012variable}.
For \texttt{Nes.~Alg.}, we have $\gamma = 1$ in the \texttt{boat} image, while in the other $4$ images, $\gamma = 2.5$ is the best value.
For \texttt{BH~Alg.}, we have $c_a = 0.005$ in the \texttt{cameraman}, \texttt{barbara} and  \texttt{boat} images, and $c_a = 0.0025$ in the  \texttt{lena} and \texttt{house} images.
The PSNR (Peak Signal to Noise Ratio \cite{Beck2009}) of the $8$ algorithms are reported in Table~\ref{tbl:PSNR_images}.

It shows that the nonsmooth $\ell_1$-norm objective produces slightly better recovery images in terms of PSNR than the $\ell_2$-norm objective in many cases for Algorithm~\ref{alg:A1}, but it is not the case in  \texttt{Nes.~ Alg.} and  \texttt{BH~Alg.} 
In addition, Algorithm \ref{alg:A1} is superior to \texttt{Nes. ~Alg.} in all cases, and is also better than \texttt{BH~Alg.} in the majority of the test.
We note that the complexity-per-iteration of the four algorithms are essentially the same, while our new adaptive strategy produces better solutions in terms of PSNR than the other two methods.
In addition, our algorithm significantly improves the PSNR if we run it further, while the nonadaptive variant does not make any clear progress on the PSNR value if we continue running it.
If we sweep the values of $\gamma_1$ in Algorithm~\ref{alg:A1} ($\gamma_1$-sweeping), we can also improve the results of this algorithm.

\vspace{-1.5ex}
\begin{acknowledgements}
\vspace{-2ex}
This research was supported  by NSF, Grant No. IPF 16-4829.
\end{acknowledgements}

\appendix
\vspace{-6ex}
\section{Appendix: The proof of technical results}
\vspace{-2.5ex}
This appendix provides the full proof of the technical results  presented in  the main text.

\vspace{-5ex}
\subsection{The proof of Lemma~\ref{le:descent_inequality}: Descent property of the proximal gradient step}\label{apdx:le:descent_inequality}
\vspace{-2ex}
By using \eqref{eq:pro_cond1} with $f_{\gamma}(\xb) := \varphi^{\ast}_{\gamma}(\Ab^{\top}\xb)$, $\nabla{f_{\gamma}}(\bar{\xb}) = \Ab\nabla{\varphi_{\gamma}^{\ast}}(\Ab^{\top}\bar{\xb})$, $\zb := \Ab^{\top}\xb$, $\bar{\zb} := \Ab^{\top}\bar{\xb}$, and $\norm{\Ab^{\top}(\xb - \bar{\xb})} \leq \norm{\Ab}\norm{\xb - \bar{\xb}}$ we can show that
\begin{equation*}
\frac{\gamma}{2}\norm{\ub^{\ast}_{\gamma}(\Ab^{\top}\xb) - \ub^{\ast}_{\gamma}(\Ab^{\top}\bar{\xb})}^2 \leq f_{\gamma}(\xb) - f_{\gamma}(\bar{\xb}) - \iprods{\nabla{f_{\gamma}}(\xb), \xb - \bar{\xb}} \leq \frac{\norm{\Ab}^2}{2\gamma}\norm{\xb - \hat{\xb}}^2.
\end{equation*}
Using this estimate, we can show that the proof of \eqref{eq:key_est21} can be done similarly as in \cite{Tran-Dinh2014a}.
\vspace{-2ex}
\Eproof

\vspace{-4ex}
\subsection{The proof of Lemma \ref{le:key_estimate1}: Key estimate}\label{apdx:le:key_estimate1}
\vspace{-2ex}
We first substitute $\beta = \frac{\gamma_{k\!+\!1}}{\norm{\Ab}^2}$ into  \eqref{eq:key_est21} and using \eqref{eq:key_est21a} to obtain
\begin{equation*}
\begin{array}{ll}
F_{\gamma_{k\!+\!1}}(\xb^{k\!+\!1}\!) \!&\leq\! F_{\gamma_{k\!+\!1}}(\xb^k) - \frac{\gamma_{k\!+\!1}}{2}\norm{ \ub^{\ast}_{\gamma_{k\!+\!1}}(\Ab^{\top}\xb^k) - \ub^{\ast}_{\gamma_{k\!+\!1}}(\Ab^{\top}\xhat^k) }^2 \nonumber\vspace{1ex}\\
&+ \frac{\norm{\Ab}^2}{\gamma_{k\!+\!1}}\iprod{ \xb^{k\!+\!1} - \xhat^k, \xb - \xbar^k}  -\frac{\norm{\Ab}^2}{2\gamma_{k\!+\!1}}\norm{\xhat^k - \xb^{k\!+\!1}}^2.
\end{array}
\end{equation*}
Multiplying this inequality by $(1-\tau_k)$ and   \eqref{eq:key_est21} by $\tau_k$, and  summing up the results we obtain
\vspace{-0.5ex}
\begin{align*}
\begin{array}{ll}
F_{\gamma_{k\!+\!1}}(\xb^{k\!+\!1}\!) \!&\leq\! (1 \!-\! \tau_k)F_{\gamma_{k\!+\!1}}(\xb^k) \!+\! \tau_k\hat{\ell}^k_{\gamma_{k\!+\!1}}(\xb) 
-\frac{(1\!-\!\tau_k)\gamma_{k\!+\!1}}{2}\norm{ \ub^{\ast}_{\gamma_{k\!+\!1}}(\Ab^{\top}\xb^k) - \ub^{\ast}_{\gamma_{k\!+\!1}}(\Ab^{\top}\xhat^k) }^2\nonumber\vspace{1ex}\\
&+ \frac{\norm{\Ab}^2}{\gamma_{k\!+\!1}}\iprod{\xhat^k - \xb^{k\!+\!1}, \xhat^k - (1-\tau_k)\xb^k - \tau_k\xb}  -\frac{\norm{\Ab}^2}{2\gamma_{k\!+\!1}}\norm{\xhat^k - \xb^{k\!+\!1}}^2,
\end{array}
\end{align*}
where $\hat{\ell}_{\gamma}^k(\xb) := f_{\gamma}(\xhat^k) + \iprods{\nabla{f_{\gamma}}(\xhat^k), \xb - \xhat^k} + g(\xb)$.

From \eqref{eq:acc_grad_scheme}, we have $\tau_k\tilde{\xb}^k = \xhat^k - (1-\tau_k)\xb^k$, we can write this inequality as
\begin{align}\label{eq:proof_est3}
F_{\gamma_{k\!+\!1}}(\xb^{k\!+\!1}) &\!\leq\! (1 \!-\! \tau_k)F_{\gamma_{k\!+\!1}}(\xb^k) \!+\! \tau_k\hat{\ell}^k_{\gamma_{k\!+\!1}}\!(\xb) 
-\frac{(1\!-\!\tau_k)\gamma_{k\!+\!1}}{2}\norm{ \ub^{\ast}_{\gamma_{k\!+\!1}}(\Ab^{\top}\xb^k) - \ub^{\ast}_{\gamma_{k\!+\!1}}(\Ab^{\top}\xhat^k) }^2\nonumber\\
& + \frac{\norm{\Ab}^2\tau_k^2}{2\gamma_{k\!+\!1}}\Big[ \norm{\tilde{\xb}^k - \xb}^2 - \Vert \tilde{\xb}^k - \tau_k^{-1}(\xhat^k - \xb^{k\!+\!1}) - \xb \Vert^2 \Big].
\end{align}
Using \eqref{eq:key_est2} with $\gamma := \gamma_{k\!+\!1}$, $\hat{\gamma} := \gamma_k$ and $\zb := \Ab^{\top}\xb^k$, we get
\vspace{-0.75ex}
\begin{align*}
f_{\gamma_{k\!+\!1}}(\xb^k) \leq f_{\gamma_k}(\xb^k) + (\gamma_k - \gamma_{k\!+\!1})b_{\Uc}(\uast_{\gamma_{k\!+\!1}}(\Ab^{\top}\xb^k)),
\vspace{-0.75ex}
\end{align*}
which leads to (\emph{cf}: $F_{\gamma} = f_{\gamma} + g$):
\vspace{-0.5ex}
\begin{align}\label{eq:proof_est4}
F_{\gamma_{k\!+\!1}}(\xb^k) \leq F_{\gamma_k}(\xb^k) + (\gamma_k - \gamma_{k\!+\!1})b_{\Uc}(\uast_{\gamma_{k\!+\!1}}(\Ab^{\top}\xb^k)).
\vspace{-0.5ex}
\end{align}
Next, we estimate $\hat{\ell}_{\gamma_{k\!+\!1}}^k$. Using the definition of $f_{\gamma}$ and $\nabla{f}_{\gamma}$, we can deduce
\begin{equation}\label{eq:proof_est5a}
\begin{array}{ll}
{\!\!\!\!}\hat{\ell}_{\gamma_{k\!+\!1}}^k(\xb) &:= f_{\gamma_{k\!+\!1}}(\xhat^k) + \iprod{\nabla{f_{\gamma_{k\!+\!1}}}(\xhat^k), \xb - \xhat^k} + g(\xb) \vspace{1ex}\\
&= \iprodb{\xhat^k, \Ab^{\top}\ub^{*}_{\gamma_{k\!+\!1}}(\Ab^{\top}\xhat^k)} - \varphi(\ub^{*}_{\gamma_{k\!+\!1}}(\Ab^{\top}\xhat^k)) - \gamma_{k\!+\!1}b_{\Uc}(\ub^{*}_{\gamma_{k\!+\!1}}(\Ab^{\top}\xhat^k)) \vspace{1ex}\\
&+ \iprodb{\xb - \xhat^k, \Ab\ub^{*}_{\gamma_{k\!+\!1}}(\Ab^{\top}\xhat^k)} + g(\xb) \vspace{1ex}\\
&= \iprodb{\xb, \Ab\ub^{*}_{\gamma_{k\!+\!1}}(\Ab^{\top}\xhat^k)} - \varphi(\ub^{*}_{\gamma_{k\!+\!1}}(\Ab^{\top}\xhat^k)) - \gamma_{k\!+\!1}b_{\Uc}(\ub^{*}_{\gamma_{k\!+\!1}}(\Ab^{\top}\xhat^k)) + g(\xb) \vspace{1ex}\\
&\leq \displaystyle\max_{\ub}\set{\iprodb{\xb, \Ab\ub}  - \varphi(\ub) : \ub \in\Uc } - \gamma_{k\!+\!1}b_{\Uc}(\ub^{*}_{\gamma_{k\!+\!1}}(\Ab^{\top}\xhat^k)) + g(\xb) \vspace{1ex}\\
&= F(\xb) - \gamma_{k\!+\!1}b_{\Uc}(\ub^{*}_{\gamma_{k\!+\!1}}(\Ab^{\top}\xhat^k)).
\end{array}
\end{equation}
Substituting $\tilde{\xb}^{k\!+\!1} := \tilde{\xb}^k - \frac{1}{\tau_k}(\xhat^k - \xb^{k\!+\!1})$ from the third line of \eqref{eq:acc_grad_scheme} together with \eqref{eq:proof_est4}, and \eqref{eq:proof_est5a} into \eqref{eq:proof_est3}, we can derive
\vspace{-0.5ex}
\begin{align*} 
\begin{array}{ll}
F_{\gamma_{k\!+\!1}}(\xb^{k\!+\!1}) \leq (1\!-\! \tau_k)F_{\gamma_{k}}(\xb^k) \!+\! \tau_kF(\xb) \!+\! \frac{\norm{\Ab}^2\tau_k^2}{2\gamma_{k\!+\!1}}\left[\norm{\tilde{\xb}^k \!\!-\! \xb}^2 - \norm{\tilde{\xb}^{k\!+\!1} \!\!-\! \xb}^2\right] - R_k,
\end{array}
\end{align*}
which is indeed \eqref{eq:key_estimate31}, where $R_k$ is given by  \eqref{eq:Rk_term}.

Finally, we prove \eqref{eq:Rk_lower_bound}.
Indeed, using  the strong convexity and the $L_b$-smoothness of $b_{\Uc}$, we can lower bound
\begin{align*}
\begin{array}{ll}
R_k &\geq \frac{\tau_k\gamma_{k\!+\!1}}{2}\norm{\uast_{\gamma_{k\!+\!1}}(\Ab^{\top}\xhat^k) \!-\! \ubar^c}^2 + \frac{(1-\tau_k)\gamma_{k\!+\!1}}{2}\norm{\uast_{\gamma_{k\!+\!1}}(\Ab^{\top}\xb^k) - \uast_{\gamma_{k\!+\!1}}(\Ab^{\top}\xhat^k)}^2\nonumber\vspace{1ex}\\
&-\frac{L_b}{2}(1-\tau_k)(\gamma_k - \gamma_{k\!+\!1})\norm{\uast_{\gamma_{k\!+\!1}}(\Ab^{\top}\xb^k) - \ubar^c}^2.
\end{array}
\end{align*}
Letting $\hat{\vb}_k := \uast_{\gamma_{k\!+\!1}}(\Ab^{\top}\xhat^k) - \ubar^c$ and $ \vb_k := \uast_{\gamma_{k\!+\!1}}(\Ab^{\top}\xb^k) - \ubar^c$, we write $R_k$ as
\vspace{-0.5ex}
\begin{align*}
\begin{array}{ll}
2\gamma_{k\!+\!1}^{-1}R_k &\geq \tau_k\norm{\hat{\vb}_k }^2 + (1-\tau_k)\norm{\hat{\vb}_k - \vb_k}^2 - (1-\tau_k)(\gamma_{k\!+\!1}^{-1}\gamma_k - 1)L_b\norm{\vb_k}^2 \vspace{1ex} \\
&=\norm{\hat{\vb}_k}^2 - 2(1-\tau_k)\iprods{\hat{\vb}_k ,\vb_k } + (1-\tau_k)\big[1 - (\gamma_{k\!+\!1}^{-1}\gamma_k - 1)L_b\big]\norm{\vb_k}^2 \vspace{1ex} \\
&= \norm{\hat{\vb}_k - (1-\tau_k)\vb_k}^2 + (1-\tau_k)\left[\tau_k - (\gamma_{k\!+\!1}^{-1}\gamma_k - 1)L_b\right]\norm{\vb_k}^2 \vspace{1ex} \\
&\geq (1-\tau_k)\left[\tau_k - (\gamma_{k\!+\!1}^{-1}\gamma_k - 1)L_b\right]\norm{\vb_k}^2,
\end{array}
\end{align*}
which obviously implies \eqref{eq:Rk_lower_bound}.
\Eproof

\vspace{-5ex}
\subsection{The proof of Lemma \ref{le:key_estimate2}: The choice of parameters}\label{apdx:le:key_estimate2}
\vspace{-3ex}
First, using the update rules of $\tau_k$ and $\gamma_k$ in \eqref{eq:param_update_rules}, we can express the quantity $m_k$ as
\vspace{-0.5ex}
\begin{align*}
m_k &:= \frac{\gamma_{k\!+\!1}(1-\tau_k)\left[\gamma_{k\!+\!1}\tau_k - L_b(\gamma_k - \gamma_{k\!+\!1})\right]}{\tau_k^2}  = -\frac{\gamma_1^2\bar{c}^2\left[(L_b-1)(k + \bar{c}) + 1\right]}{(k+\bar{c})^2}.
\vspace{-0.5ex}
\end{align*}
Moreover, it follows from the properties of $b_{\Uc}$ that
\begin{equation*}
\frac{1}{2}\Vert \ub^{*}_{\gamma_{k\!+\!1}}(\Ab^{\top}\xb^k) -\ubar^c\Vert^2 \leq b_{\Uc}(\ub^{*}_{\gamma_{k\!+\!1}}(\Ab^{\top}\xb^k)) \leq D_{\Uc}.
\end{equation*}
Multiplying the lower bound \eqref{eq:Rk_lower_bound} by $\frac{\gamma_{k\!+\!1}}{\tau_k^2}$ and combining the result  with the last inequality and the estimate of $m_k$, we obtain the first lower bound in \eqref{eq:Rk_lower_bound2}.

Next, using the update rules \eqref{eq:param_update_rules} of $\tau_k$ and $\gamma_k$, we have $\frac{(1-\tau_k)\gamma_{k\!+\!1}}{\tau_k^2} = \frac{(k+\bar{c}-1)(k+\bar{c})^2\gamma_1\bar{c}}{(k+\bar{c})(k+\bar{c})}  = \frac{\gamma_1\bar{c}(k+\bar{c}-1)^2}{(k+\bar{c}-1)} = \frac{\gamma_k}{\tau_{k-1}^2}$, which is the second equality in \eqref{eq:Rk_lower_bound2}. 

Using \eqref{eq:key_estimate31} with the lower bound of $R_k$ from \eqref{eq:Rk_lower_bound2}, we have
\begin{equation}\label{eq:variant2_proof1}
\frac{\gamma_{k\!+\!1}}{\tau_k^2}\Delta{F}_{k\!+\!1} \!+\! \frac{\norm{\Ab}^2}{2}\norm{\tilde{\xb}^{k\!+\!1} \!\!\!\!-\! \xopt}^2 \!\leq\! \frac{(1 \!-\! \tau_k)\gamma_{k\!+\!1}}{\tau_k^2}\Delta{F}_k \!+\!  \frac{\norm{\Ab}^2}{2}\norm{\tilde{\xb}^k \!\!-\! \xopt}^2 + s_kD_{\Uc},
\end{equation}
where $\Delta{F}_k := F_{\gamma_k}(\xb^k) - \Fopt$ and $s_k := \frac{\gamma_1^2\bar{c}^2\left[(L_b\!-\!1)(k \!+\! \bar{c}) \!+\! 1\right]}{(k \!+\! \bar{c})^2}$.
Using this inequality and  the relation $\frac{(1-\tau_k)\gamma_{k\!+\!1}}{\tau_k^2} = \frac{\gamma_k}{\tau_{k-1}^2}$ in \eqref{eq:Rk_lower_bound2}, we can easily show that
\begin{align*} 
\frac{\gamma_{k\!+\!1}}{\tau_k^2}\Delta{F}_{k\!+\!1} \!+\! \frac{\norm{\Ab}^2}{2}\norm{\tilde{\xb}^{k\!+\!1} \!\!\!\!-\! \xopt}^2 \!\leq\! \frac{\gamma_{k}}{\tau_{k\!-\!1}^2}\Delta{F}_k \!+\!  \frac{\norm{\Ab}^2}{2}\norm{\tilde{\xb}^k \!\!-\! \xopt}^2 + s_kD_{\Uc}.
\end{align*}
By induction, we obtain from the last inequality that 
\begin{equation}\label{eq:variant2_proof5}
\frac{\gamma_{k\!+\!1}}{\tau_k^2}\Delta{F}_{k\!+\!1} \!+\! \frac{\norm{\Ab}^2}{2}\norm{\tilde{\xb}^{k\!+\!1} \!\!\!\!-\! \xopt}^2 \!\leq\! \frac{(1\!-\!\tau_0)\gamma_1}{\tau_0^2}\Delta{F}_0 \!+\!  \frac{\norm{\Ab}^2}{2}\norm{\tilde{\xb}^0 \!\!-\! \xopt}^2 \!+\!  S_kD_{\Uc},
\end{equation}
which implies \eqref{eq:key_estimate2_new}, where $S_k := \sum_{i=0}^ks_k = \gamma_1^2\bar{c}^2\sum_{i=0}^k\frac{\left[(L_b\!-\!1)(i +  \bar{c}) + 1\right]}{(i  + \bar{c})^2}$.

Finally, to prove \eqref{eq:Sk_bound}, we use two elementary inequalities $\sum_{i=1}^{k+\bar{c}}\frac{1}{i} < 1 + \ln(k+\bar{c})$ and $\sum_{i=0}^k\frac{1}{(i+\bar{c})^2} \leq \frac{1}{\bar{c}^2} + \sum_{i=1}^k\frac{1}{(i+\bar{c}-1)(i+\bar{c})} < \frac{1}{\bar{c}^2} + \frac{1}{\bar{c}}$.
\Eproof

\vspace{-3.5ex}
\subsection{The proof of Corollary~\ref{co:update_param2}: The smooth accelerated gradient method}\label{apdx:co:update_param2}
\vspace{-2.5ex}
First, it is similar to the proof of \eqref{eq:variant2_proof1}, we can derive
\begin{equation*} 
\frac{\gamma_{k\!+\!1}}{(L_g\gamma_{k+1} + \norm{\Ab}^2)\tau_k^2}\Delta{F}_{k\!+\!1} \!+\! \frac{1}{2}\norm{\tilde{\xb}^{k\!+\!1} \!\!\!\!-\! \xopt}^2 \!\leq\! \frac{(1 \!-\! \tau_k)\gamma_{k\!+\!1}}{(L_g\gamma_{k+1} + \norm{\Ab}^2)\tau_k^2}\Delta{F}_k \!+\!  \frac{1}{2}\norm{\tilde{\xb}^k \!\!-\! \xopt}^2 + \hat{s}_kD_{\Uc},
\end{equation*}
where $\Delta{F}_k := F_{\gamma_{k}}(\xb^k) - F^{\star}$, and $\hat{s}_k := \frac{\gamma_{k\!+\!1}(1-\tau_k)\left[L_b(\gamma_k - \gamma_{k\!+\!1}) - \gamma_{k\!+\!1}\tau_k\right]}{\tau_k^2(L_g\gamma_{k+1}+\norm{\Ab}^2)}$.

Next, we impose  condition $\frac{(1-\tau_k)\gamma_{k+1}}{\tau_k^2(L_g\gamma_{k+1} + \norm{\Ab}^2)} = \frac{\gamma_k}{\tau_{k-1}^2(L_g\gamma_k + \norm{\Ab}^2)}$ and choose $\tau_k = \frac{1}{k+1}$.
Then, we can show from the last condition that $\gamma_{k+1} = \frac{k\gamma_k\norm{\Ab}^2}{L_g\gamma_k + \norm{\Ab}^2(k+1)}$.
Now, we show that $\gamma_k \leq \frac{\gamma_1}{k+1}$. 
Indeed, we have $\frac{1}{\gamma_{k+1}} =  \left(\frac{k+1}{k}\right)\frac{1}{\gamma_k} + \frac{L_g}{\norm{\Ab}^2k}  \geq  \left(\frac{k+1}{k}\right)\frac{1}{\gamma_k}$, 
which implies that $\gamma_{k+1} \leq \frac{k}{k+1}\gamma_k$. 
By induction, we get $\gamma_{k+1} \leq \frac{\gamma_1}{k+1}$.
On the other hand, assume that $\frac{1}{\gamma_{k+1}} =  \left(\frac{k+1}{k}\right)\frac{1}{\gamma_k} + \frac{L_g}{\norm{\Ab}^2k}  \leq \frac{1}{\gamma_k} \left(\frac{k}{k-1}\right)$ for $k\geq 2$.
This condition leads to $\gamma_k \leq \frac{\norm{\Ab}^2}{L_g(k-1)}$. 
Using $\gamma_k \leq \frac{\gamma_1}{k} = \frac{\norm{\Ab}^2}{L_gk}$ due to the choice of $\gamma_1$, we can show that $\gamma_k \leq \frac{\norm{\Ab}^2}{L_g(k-1)}$.
Hence, with the choice $\gamma_1 := \frac{\norm{\Ab}^2}{L_g}$, the estimate $\frac{1}{\gamma_{k+1}} \leq  \frac{1}{\gamma_k} \left(\frac{k}{k-1}\right)$ and the update rule of $\gamma_k$ eventually imply
\begin{equation*}
\frac{\gamma_1\norm{\Ab}^2}{(L_g\gamma_1 + 2\norm{\Ab}^2)k} = \frac{\gamma_2}{k} \leq \gamma_{k+1} \leq \frac{\gamma_1}{k+1},~~\forall k\geq 1.
\end{equation*}
This condition leads to  $\frac{\tau_{k-1}^2(L_g\gamma_k + \norm{\Ab}^2)}{\gamma_k} = L_g\tau_{k-1}^2 + \frac{\tau_{k-1}^2}{\gamma_k}\norm{\Ab}^2 \leq \frac{L_g}{k^2} + \frac{3L_g(k-1)}{k^2} = \frac{3L_g}{k}$.
Using the estimates of $\tau_k$ and $\gamma_k$, we can easily show that $\hat{s}_k \leq \frac{L_b\norm{\Ab}^2}{L_g^2k(k+2)} + \frac{L_b\norm{\Ab}^2}{L_g^2(k+1)(k+2)} + \frac{(L_b-1)\norm{\Ab}^2k}{L_g(k+1)(k+2)}$.
Hence, we can show that
\begin{equation*}
\hat{S}_k := \sum_{i=0}^k\hat{s}_i \leq \frac{2L_b\norm{\Ab}^2}{L_g^2} +  \frac{(L_b-1)\norm{\Ab}^2}{L_g^2}\sum_{i=0}^k\frac{1}{i+1} = \frac{2L_b\norm{\Ab}^2}{L_g^2} +  \frac{(L_b-1)\norm{\Ab}^2}{L_g^2}\left(\ln(k) + 1\right).
\end{equation*}
Using this estimate, we can show that 
\begin{equation*}
F_{\gamma_k}(\xb^k) - F^{\star} \leq \frac{3L_g}{2k}\norm{\xb^0 - \xopt}^2 + \frac{2L_b\norm{\Ab}^2}{L_g^2k}D_{\Uc} +  \frac{(L_b-1)\norm{\Ab}^2}{L_g^2k}\left(\ln(k) + 1\right)D_{\Uc}.
\end{equation*}
Finally, using the bound \eqref{eq:key_est1} and $\gamma_k \leq \frac{\norm{\Ab}^2}{L_g(k+1)} < \frac{\norm{\Ab}^2}{L_gk}$, we obtain \eqref{eq:convergence5}.
\Eproof

\vspace{-3.5ex}
\subsection{The proof of Theorem \ref{th:primal_recovery}: Primal solution recovery}\label{apdx:th:primal_recovery}
\vspace{-2.5ex}
Let $\Delta{F}_k := F_{\gamma_{k}}(\xb^k) - \Fopt$. 
Then, by \eqref{eq:key_est1}, we have $\Delta{F}_k \geq F(\xb^k) - \Fopt - \gamma_kD_{\Uc} \geq -\gamma_kD_{\Uc}$.
Similar to the proof of Lemma~\ref{le:key_estimate2}, we can prove that 
\begin{align}\label{eq:proof_lm41_2} 
\frac{\gamma_{i\!+\!1}}{\tau_i^2}\Delta{F}_{i\!+\!1} \leq \frac{\gamma_i}{\tau_{i-1}^2}\Delta{F}_i + \frac{\gamma_{i\!+\!1}}{\tau_i}\Delta{\hat{\ell}}_{\gamma_{i\!+\!1}}(\xb) + \frac{\norm{\Ab}^2}{2}\left(\norm{\tilde{\xb}^i - \xb}^2 - \norm{\tilde{\xb}^{i\!+\!1} - \xb}^2\right) + s_iD_{\Uc},
\end{align}
where $s_i := \frac{\left[(L_b\!-\!1)(i +  \bar{c}) + 1\right]}{(i  + \bar{c})^2}$ as in the proof of Lemma \ref{le:key_estimate2}, and $\Delta{\hat{\ell}}_{\gamma_{k\!+\!1}}^k(\xb) =    \iprodb{\xb, \Ab\ub^{*}_{\gamma_{k\!+\!1}}(\xhat^k)} - \varphi(\ub^{*}_{\gamma_{k\!+\!1}}(\xhat^k)) + g(\xb) - \Fopt =  \iprodb{\xb, \Ab\ub^{*}_{\gamma_{k\!+\!1}}(\xhat^k) - \bb} - \varphi(\ub^{*}_{\gamma_{k\!+\!1}}(\xhat^k))  + s_{\Kc}(\xb) - \Fopt$.
Summing up this inequality from $i=1$ to $i=k$ and using $\tau_0 = 1$ and $\tilde{\xb}^0 =\xb^0$, we obtain
\begin{align}\label{eq:proof_lm41_3a}
{\!\!\!}\frac{\gamma_{k\!+\!1}}{\tau_k^2}\Delta{F}_{k\!+\!1} &\leq \sum_{i=1}^k\frac{\gamma_{i+1}}{\tau_i}\Delta{\hat{\ell}}_{\gamma_{i\!+\!1}}(\xb) \!+\! \frac{\norm{\Ab}^2}{2}\left(\norm{\tilde{\xb}^1 \!-\! \xb}^2 \!-\! \norm{\tilde{\xb}^{k\!+\!1} \!\!- \xb}^2\right) +\gamma_1\Delta{F}_1 + S_kD_{\Uc},{\!\!\!}
\end{align}
where $S_k := \sum_{i=1}^ks_i$.
Now, using again \eqref{eq:proof_lm41_2} with $k=1$, $\xb^0 = \tilde{\xb}^0$ and $\tau_0 = 1$, we get $\gamma_1\Delta{F}_1 \leq \gamma_1\tau_0\Delta{\hat{\ell}}_{\gamma_1}(x) + \frac{\norm{\Ab}^2}{2}\left(\Vert\xb^0 - \xopt\Vert^2 - \Vert\tilde{\xb}^1 - \xopt\Vert^2\right)$.
Using this into \eqref{eq:proof_lm41_3a}, one yields
\begin{align}\label{eq:proof_lm41_3} 
\frac{\gamma_{k\!+\!1}}{\tau_k^2}\Delta{F}_{k\!+\!1} &\leq\sum_{i=0}^k\frac{\gamma_{i+1}}{\tau_i}\left( \iprodb{\xb, \Ab\ub^{*}_{\gamma_{i\!+\!1}}(\xhat^i) - \bb} \!-\! \varphi(\ub^{*}_{\gamma_{i\!+\!1}}(\xhat^i)) + s_{\Kc}(\xb)- \Fopt\right)\nonumber\\
&   + \frac{\norm{\Ab}^2}{2}\norm{\xb^0 - \xb}^2 + S_kD_{\Uc}.
\end{align}
Combining $\ubar^k$ defined by \eqref{eq:averaging_scheme} with $w_i :=\frac{\gamma_{i+1}}{\tau_i}$, and the convexity of $\varphi$, we have
\begin{equation*}
\sum_{i=0}^k\frac{\gamma_{i+1}}{\tau_i}\left( \iprodb{\xb, \Ab\ub^{*}_{\gamma_{i\!+\!1}}(\xhat^i) - \bb} \!-\! \varphi(\ub^{*}_{\gamma_{i\!+\!1}}(\xhat^i))\right) \leq \Gamma_k\left(\iprodb{\xb, \Ab\bar{\ub}^k - \bb} - \varphi(\ubar^k)\right).
\end{equation*}
Substituting this into \eqref{eq:proof_lm41_3} and then using  $\Delta{F}_{k\!+\!1} \geq -\gamma_{k\!+\!1}D_{\Uc}$ we get
\begin{align*} 
-\frac{\gamma_{k\!+\!1}^2}{\tau_k^2}D_{\Uc} 
\leq \Gamma_k\left(\iprodb{\xb, \Ab\bar{\ub}^k - \bb} - \varphi(\ubar^k) \!+\! s_{\Kc}(\xb) \!-\! \Fopt\right) \!+\! \frac{\norm{\Ab}^2}{2}\norm{\xb^0 \! -\! \xb}^2 + S_kD_{\Uc},
\end{align*}
which implies
\begin{align*} 
\Fopt \leq \iprodb{\xb, \Ab\bar{\ub}^k - \bb} - \varphi(\ubar^k)  + s_{\Kc}(\xb) + \frac{\norm{\Ab}^2}{2\Gamma_k}\norm{\xb^0 - \xb}^2 + \frac{D_{\Uc}}{\Gamma_k}\left(\frac{\gamma_{k\!+\!1}^2}{\tau_k^2} + S_k\right).
\end{align*}
By arranging this inequality, we get
\begin{equation}\label{eq:proof_lm41_3b} 
\inf_{\rb\in\Kc}\iprodb{\xb, \bb - \Ab\bar{\ub}^k - \rb} +  \varphi(\ubar^k) \leq -\Fopt + \frac{\norm{\Ab}^2}{2\Gamma_k}\norm{\xb^0 - \xb}^2 +  \frac{D_{\Uc}}{\Gamma_k}\left(\frac{\gamma_{k\!+\!1}^2}{\tau_k^2} + S_k\right),
\end{equation}
where we use the relation $-s_{\Kc}(\xb) = -\sup_{\rb\in\Kc}\iprods{\xb, \rb} = \inf_{\rb\in\Kc}\iprods{\xb, -\rb}$.
On the other hand, by the saddle point theory for the primal and dual problems \eqref{eq:constr_cvx} and \eqref{eq:constr_cvx_dual}, for any optimal solution $\xopt$, we can show that
\begin{align*}
-F^{\star} = \varphi^{\star} \leq \varphi(\ub) - \iprods{\xopt, \Ab\ub - \bb + \rb}, ~\forall\ub\in\Uc,~\rb\in\Kc.
\end{align*}
Since this inequality holds for any $\rb\in\Kc$ and $\ub\in\Uc$, by using  $\ub = \bar{\ub}^k$, it leads to
\begin{align}\label{eq:proof_lm41_4}
\inf_{\rb\in\Kc}\iprodb{\xopt, \Ab\ubar^k - \bb + \rb} - \varphi(\ubar^k) \leq F^{\star}.
\end{align}
Combining \eqref{eq:proof_lm41_3b}  and \eqref{eq:proof_lm41_4} yields
\begin{equation}\label{eq:proof_lm41_5}
\min_{\rb\in\Kc}\Big\{\iprodb{\xopt \!-\! \xb, \rb + \Ab\bar{\ub}^k - \bb} -  \frac{\norm{\Ab}^2}{2\Gamma_k}\norm{\xb^0 \!\!-\! \xb}^2\Big\} \leq  
\frac{D_{\Uc}}{\Gamma_k}\left(\frac{\gamma_{k\!+\!1}^2}{\tau_k^2} + S_k\right), ~~\forall\xb\in\R^p.
\end{equation}
Taking $\xb := \xb^0 - \norm{\Ab}^{-2}\Gamma_k(\Ab\bar{\ub}^k - \bb + \rb)$ for any $\rb\in\Kc$, we obtain from \eqref{eq:proof_lm41_5} that
\begin{align*}
\min_{\rb\in\Kc}\set{\frac{\Gamma_k}{\norm{\Ab}^2}\norm{\Ab\bar{\ub}^k + \rb - \bb}^2 + 2\iprodb{\Ab\bar{\ub}^k - \bb + \rb, \xopt - \xb^0}} \leq \frac{2D_{\Uc}}{\Gamma_k}\left(\frac{ \gamma_{k\!+\!1}^2}{\tau_k^2} + S_k\right),
\end{align*}
which implies (by the Cauchy-Schwarz inequality)
\begin{align*} 
\min_{\rb\in\Kc}\set{\Gamma_k\norm{\Ab\bar{\ub}^k  - \bb + \rb}^2 - 2\norm{\Ab}^2\norm{\Ab\bar{\ub}^k  - \bb + \rb}\norm{\xopt - \xb^0}} \leq \frac{2\norm{\Ab}^2D_{\Uc}}{\Gamma_k}\left(\frac{\gamma_{k\!+\!1}^2}{\tau_k^2} + S_k\right).
\end{align*}
By elementary calculations and $\dist{\bb - \Ab\bar{\ub}^k, \Kc} = \min\set{\norm{\Ab\bar{\ub}^k\!-\!\bb + \rb} : \rb\in\Kc}$, we can show from the last inequality that
\begin{align}\label{eq:proof_lm41_6}
\dist{\bb - \Ab\bar{\ub}^k, \Kc}  \leq \frac{\norm{\Ab}^2}{\Gamma_k}\Big[\norm{\xb^0 \!-\! \xopt} \!+\! \sqrt{\norm{\xb^0 \!-\! \xopt}^2 \!+\! \frac{2}{\norm{\Ab}^2}\Big( S_k+\frac{\gamma_{k\!+\!1}^2}{\tau_k^2}\Big)D_{\Uc}}\Big].
\end{align}
To prove the first estimate of \eqref{eq:primal_recovery2}, we use \eqref{eq:proof_lm41_3b} with $\xb = \boldsymbol{0}^p$ and $F^{\star} = -\varphi^{\star}$ to get
\begin{align}\label{eq:proof_lm41_6b}
\varphi(\ubar^k) - \varphi^{\star} \leq \frac{1}{\Gamma_k}\Big[\frac{\norm{\Ab}^2}{2}\norm{\xb^0}^2 + \Big(\frac{\gamma_{k\!+\!1}^2}{\tau_k^2} + S_k\Big)D_{\Uc}\Big].
\end{align}
Since we apply Algorithm $\mathrm{\ref{alg:A1}(c)}$ to solve the dual problem \eqref{eq:constr_cvx_dual} using $b_{\Uc}$ such that $L_b = 1$, we have $S^k \leq 2\gamma_1^2$. 
Then, by using $\gamma_{k\!+\!1} = \frac{\bar{c}\gamma_1}{k+\bar{c}}$,  and  $\tau_k := \frac{1}{k+\bar{c}}$, we can show that 
$\frac{\gamma_{k\!+\!1}^2}{\tau_k^2} = \gamma_1\bar{c}$.
Moreover, we also have $\Gamma_k := \sum_{i=0}^k\frac{\gamma_{i+1}}{\tau_i} = \gamma_1\bar{c}(k+1)$.
Using these estimates, and  $S_k  \leq 2\gamma_1^2$ from \eqref{le:key_estimate2} into \eqref{eq:proof_lm41_6} and \eqref{eq:proof_lm41_6b} we obtain \eqref{eq:primal_recovery2}.
For the left-hand side inequality in the first estimate of \eqref{eq:primal_recovery2}, we use  a simple bound $-\norm{\xopt}\dist{\bb - \Ab\ub,\Kc} \leq \varphi(\ub) - \varphi^{\star}$ for $\ub = \bar{\ub}^k \in\Uc$ from the saddle point theory as in \eqref{eq:proof_lm41_4}.
\Eproof

\vspace{-4ex}
\bibliographystyle{plain}

\begin{thebibliography}{10}
\vspace{-2ex}

\bibitem{argyriou2014hybrid}
Andreas Argyriou, Marco Signoretto, and J~Suykens.
\newblock Hybrid conditional gradient-smoothing algorithms with applications to
  sparse and low rank regularization.
\newblock {\em Regularization, Optimization, Kernels, and Support Vector
  Machines}, pages 53--82, 2014.

\bibitem{baes2009smoothing}
Michel Baes and Michael B{\"u}rgisser.
\newblock Smoothing techniques for solving semi-definite programs with many
  constraints.
\newblock {\em Optimization Online}, 2009.

\bibitem{Bauschke2011}
H.H. Bauschke and P.~Combettes.
\newblock {\em Convex analysis and monotone operators theory in {H}ilbert
  spaces}.
\newblock Springer-Verlag, 2011.

\bibitem{Beck2009}
A.~Beck and M.~Teboulle.
\newblock {A} fast iterative shrinkage-thresholding agorithm for linear inverse
  problems.
\newblock {\em SIAM J. Imaging Sci.}, 2(1):183--202, 2009.

\bibitem{Beck2012a}
A.~Beck and M.~Teboulle.
\newblock Smoothing and first order methods: A unified framework.
\newblock {\em SIAM J. Optim.}, 22(2):557--580, 2012.

\bibitem{Becker2011b}
S.~Becker, J.~Bobin, and E.J. Cand\`{e}s.
\newblock {NESTA}: A fast and accurate first-order method for sparse recovery.
\newblock {\em SIAM J. Imaging Sci.}, 4(1):1--39, 2011.

\bibitem{Becker2011a}
S.~Becker, E.~J. Cand\`{e}s, and M.~Grant.
\newblock Templates for convex cone problems with applications to sparse signal
  recovery.
\newblock {\em Math. Program. Compt.}, 3(3):165--218, 2011.

\bibitem{Belloni2011}
A.~Belloni, V.~Chernozhukov, and L.~Wang.
\newblock Square-root {LASSO}: {P}ivotal recovery of sparse signals via conic
  programming.
\newblock {\em Biometrika}, 94(4):791--806, 2011.

\bibitem{BenTal2001}
A.~Ben-Tal and A.~Nemirovski.
\newblock {\em {L}ectures on modern convex optimization: {A}nalysis,
  algorithms, and engineering applications}, volume~3 of {\em MPS/SIAM Series
  on Optimization}.
\newblock SIAM, 2001.

\bibitem{Bertsekas1989b}
D.P. Bertsekas and J.~N. Tsitsiklis.
\newblock {\em {P}arallel and distributed computation: {N}umerical methods}.
\newblock Prentice Hall, 1989.

\bibitem{boct2012variable}
Radu~Ioan Bo{\c{t}} and Christopher Hendrich.
\newblock A variable smoothing algorithm for solving convex optimization
  problems.
\newblock {\em TOP}, 23(1):124--150, 2012.

\bibitem{Bot2013}
R.I. Bot and C.~Hendrich.
\newblock A double smoothing technique for solving unconstrained
  nondifferentiable convex optimization problems.
\newblock {\em Compt. Optim. Appl.}, 54(2):239--262, 2013.

\bibitem{Boyd2011}
S.~Boyd, N.~Parikh, E.~Chu, B.~Peleato, and J.~Eckstein.
\newblock Distributed optimization and statistical learning via the alternating
  direction method of multipliers.
\newblock {\em Foundations and Trends in Machine Learning}, 3(1):1--122, 2011.

\bibitem{chen2014first}
J.~Chen and S.~Burer.
\newblock A first-order smoothing technique for a class of large-scale linear
  programs.
\newblock {\em SIAM Journal on Optimization}, 24(2):598--620, 2014.

\bibitem{Combettes2011a}
P.~Combettes and J.-C. Pesquet.
\newblock {\em {F}ixed-{P}oint {A}lgorithms for {I}nverse {P}roblems in
  {S}cience and {E}ngineering}, chapter {P}roximal {S}plitting {M}ethods in
  {S}ignal {P}rocessing, pages 185--212.
\newblock Springer-Velarg, 2011.

\bibitem{Devolder2012}
O.~Devolder, F.~Glineur, and Y.~Nesterov.
\newblock Double smoothing technique for large-scale linearly constrained
  convex optimization.
\newblock {\em SIAM J. Optim.}, 22(2):702--727, 2012.

\bibitem{Goldfarb2012}
D.~Goldfarb and S.~Ma.
\newblock Fast alternating linearization methods of minimization of the sum of
  two convex functions.
\newblock {\em Math. Program., Ser. A}, 141(1):349--382, 2012.

\bibitem{Grant2006}
M.~Grant, S.~Boyd, and Y.~Ye.
\newblock Disciplined convex programming.
\newblock In L.~Liberti and N.~Maculan, editors, {\em Global Optimization: From
  Theory to Implementation}, Nonconvex Optimization and its Applications, pages
  155--210. Springer, 2006.

\bibitem{Necoara2008}
I.~Necoara and J.A.K. Suykens.
\newblock Applications of a smoothing technique to decomposition in convex
  optimization.
\newblock {\em IEEE Trans. Automatic control}, 53(11):2674--2679, 2008.

\bibitem{Nedelcu2014}
V.~Nedelcu, I.~Necoara, and Q.~Tran-Dinh.
\newblock {C}omputational {C}omplexity of {I}nexact {G}radient {A}ugmented
  {L}agrangian {M}ethods: {A}pplication to {C}onstrained {MPC}.
\newblock {\em SIAM J. Optim. Control}, 52(5):3109--3134, 2014.

\bibitem{Nesterov1983}
Y.~Nesterov.
\newblock A method for unconstrained convex minimization problem with the rate
  of convergence $o(1/k^2)$.
\newblock {\em Doklady AN SSSR}, 269 (Soviet Math.
  Dokl.):543--547, 1983.

\bibitem{Nesterov2004}
Y.~Nesterov.
\newblock {\em {I}ntroductory lectures on convex optimization: {A} basic
  course}, volume~87 of {\em Applied Optimization}.
\newblock Kluwer Academic Publishers, 2004.

\bibitem{Nesterov2005d}
Y.~Nesterov.
\newblock Excessive gap technique in nonsmooth convex minimization.
\newblock {\em SIAM J. Optimization}, 16(1):235--249, 2005.

\bibitem{Nesterov2005c}
Y.~Nesterov.
\newblock Smooth minimization of non-smooth functions.
\newblock {\em Math. Program.}, 103(1):127--152, 2005.

\bibitem{Nesterov2007d}
Y.~Nesterov.
\newblock Smoothing technique and its applications in semidefinite
  optimization.
\newblock {\em Math. Program.}, 110(2):245--259, 2007.

\bibitem{Nesterov2007}
Y.~Nesterov.
\newblock Gradient methods for minimizing composite objective function.
\newblock {\em Math. Program.}, 140(1):125--161, 2013.

\bibitem{orabona2012prisma}
Francesco Orabona, Andreas Argyriou, and Nathan Srebro.
\newblock {PRISMA}: {P}roximal iterative smoothing algorithm.
\newblock {\em Tech. Report.}, pages 1--21, 2012.
\newblock http://arxiv.org/abs/1206.2372.

\bibitem{Parikh2013}
N.~Parikh and S.~Boyd.
\newblock Proximal algorithms.
\newblock {\em Foundations and Trends in Optimization}, 1(3):123--231, 2013.

\bibitem{Rockafellar1985}
R.T. Rockafellar.
\newblock {\em Convexity and Duality in Optimization}, chapter {M}onotropic
  {P}rogramming: {A} generalization of linear programming and network
  programming., pages 10--036.
\newblock Springer-Verlag, 1985.

\bibitem{TranDinh2012g}
Q.~Tran-Dinh.
\newblock {\em {S}equential {C}onvex {P}rogramming and {D}ecomposition
  {A}pproaches for {N}onlinear {O}ptimization}.
\newblock {PhD} {T}hesis, Arenberg Doctoral School, KU Leuven, Department of
  Electrical Engineering (ESAT/SCD) and Optimization in Engineering Center,
  Kasteelpark Arenberg 10, 3001-Heverlee, Belgium, November 2012.

\bibitem{Tran-Dinh2014a}
Q.~Tran-Dinh and V.~Cevher.
\newblock A primal-dual algorithmic framework for constrained convex
  minimization.
\newblock {\em Tech. Report., LIONS}, pages 1--54, 2014.

\bibitem{Tran-Dinh2013a}
Q.~Tran-Dinh, A.~Kyrillidis, and V.~Cevher.
\newblock Composite self-concordant minimization.
\newblock {\em J. Mach. Learn. Res.}, 15:374--416, 2015.

\bibitem{TranDinh2012a}
Q.~Tran-Dinh, C.~Savorgnan, and M.~Diehl.
\newblock {C}ombining {L}agrangian decomposition and excessive gap smoothing
  technique for solving large-scale separable convex optimization problems.
\newblock {\em Compt. Optim. Appl.}, 55(1):75--111, 2013.

\bibitem{Yang2011}
J.~Yang and Y.~Zhang.
\newblock Alternating direction algorithms for $\ell_1$ -problems in
  compressive sensing.
\newblock {\em SIAM J. Scientific Computing}, 33(1--2):250--278, 2011.

\end{thebibliography}

\end{document}